\theoremstyle{plain}
\newtheorem{thm}{Theorem}
\newtheorem{lemma}[thm]{Lemma}
\newtheorem{prop}[thm]{Proposition}
\newtheorem{cor}[thm]{Corollary}
\theoremstyle{definition}
\newtheorem{defn}[thm]{Definition}
\newtheorem{eg}[thm]{Example}
\theoremstyle{remark}
\newtheorem{remark}[thm]{Remark}
\newcommand{\nc}{\newcommand}
\numberwithin{equation}{section}
\def\makeop#1{\expandafter\def\csname#1\endcsname
  {\mathop{\rm #1}\nolimits}\ignorespaces}
\def\makebb#1{\expandafter\def
  \csname bb#1\endcsname{{\mathbb{#1}}}\ignorespaces}
\def\makebf#1{\expandafter\def\csname bf#1\endcsname{{\bf
      #1}}\ignorespaces} 
\def\makegr#1{\expandafter\def
  \csname gr#1\endcsname{{\mathfrak{#1}}}\ignorespaces}
\def\makescr#1{\expandafter\def
  \csname scr#1\endcsname{{\EuScript{#1}}}\ignorespaces}
\def\makecal#1{\expandafter\def\csname cal#1\endcsname{{\mathcal
      #1}}\ignorespaces} 
\def\doLetters#1{#1A #1B #1C #1D #1E #1F #1G #1H #1I #1J #1K #1L #1M
                 #1N #1O #1P #1Q #1R #1S #1T #1U #1V #1W #1X #1Y #1Z}
\def\doletters#1{#1a #1b #1c #1d #1e #1f #1g #1h #1i #1j #1k #1l #1m
                 #1n #1o #1p #1q #1r #1s #1t #1u #1v #1w #1x #1y #1z}
     \def\qed{\qedmark\medbreak}%
\def\qedmark{{\enspace\vrule height 6pt width 5pt depth 1.5pt}}%
    \def\setminus{\smallsetminus}
\def\Spec{{\rm Spec}\,}
\newcommand{\Z}{\mathbb Z}
\newcommand{\Q}{\mathbb Q}
\newcommand{\R}{\mathbb R}
\newcommand{\C}{\mathbb C}
\newcommand{\isoto}{\stackrel{\sim}{\longrightarrow}}
\nc{\embed}{\hookrightarrow}
\newcommand{\ch}{characteristic }
\nc{\ol}{\overline}
\nc{\wt}{\widetilde}
\nc{\opp}{\mathrm{opp}}
\begin{document}
\renewcommand{\thefootnote}{\fnsymbol{footnote}}
\setcounter{footnote}{-1}



\title[Japanese Dedekind domains]{Japanese Dedekind domains are excellent} 
\author{Chia-Fu Yu}
\address{
Institute of Mathematics, Academia Sinica \\
Astronomy Mathematics Building \\
No.~1, Roosevelt Rd. Sec.~4 \\ 
Taipei, Taiwan, 10617} 
\email{chiafu@math.sinica.edu.tw}



\date{\today}
\subjclass[2010]{Primary: 11S15; Secondary: 12J20, 13F05.} 
\keywords{ramification of extensions, excellent Dedekind domains, 
valuations}


\begin{abstract}
  The well-known fundamental identity in number theory
  expresses the degree of an extension of
  global fields in terms of local information. In this article 
  we show a generalized 
  fundamental identity for arbitrary Dedekind domains.
  As an application, we show that any Japanese Dedekind
  domain is excellent. 
\end{abstract} 

\maketitle


\section{Introduction}
\label{sec:introduction}

Let $A$ be an $S$-ring of integers of a global field $K$, where $S$ is
a nonempty finite set of places of $K$ containing all Archimedean
ones. 
The well-known fundamental identity in number theory states that for
any finite field extension $L/K$ and any nonzero prime ideal $\grp$ of
$A$, one has 
\begin{equation}
  \label{eq:fe}
  \sum_{i=1}^r e_i f_i = [L:K],
\end{equation}
where $e_i$ and $f_i$ are the ramification index and residue class
degree of the prime ideals $\grP_i$ lying over 
$\grp$ (for $1\le i \le r$), respectively. 
If $A$ is an arbitrary Dedekind domain, then the fundamental identity
is no longer true but instead one has the fundamental inequality
$\sum_i e_i f_i\le [L:K]$ in general. It is well known that when the
integral closure $B$ of $A$ is $L$ is a finitely generated $A$-module,
the equality holds; on the other hand the strict inequality can occur;
see \cite[Remark p.~15]{serre:lf}.  

One can rephrase the statement in terms of valuation theory,
cf.~\cite[Chap.~II, Proposition 8.5, p.~165]{neukirch:ant}.
In this reformation, Cohen and
Zariski~\cite{cohen-zariski} proved 
a fundamental inequality for extensions of
an arbitrary valuation $v$ of a field $K$ to a finite field extension 
$L/K$. Earlier Schmidt \cite{schmidt:1936}
constructed a valuation ring $A$ in $K$ which is non-Japanese,
that is, the integral closure $B$ of $A$ in some finite extension
$L/K$ is not finite over $A$.
Furthermore, 
Schmidt's example realizes the strict
inequality in the fundamental inequality, 
cf.~\cite{cohen-zariski}. 

In this paper we show a modified fundamental identity
for arbitrary Dedekind domains as follows.


\begin{thm}\label{thm:1}
  Let $A$ be a Dedekind domain with quotient field $K$, $L$ a finite
  extension field of $K$ with integral closure $B$ of $A$. For any
  nonzero prime ideal $\grp$ of $A$ one has the equality 
  \begin{equation}
    \label{eq:1.1}
  \sum_{\grP|\grp} e_\grP f_\grP = [(L\otimes_K K_\grp^*)_{\rm ss}:
  K_\grp^*],    
  \end{equation}
where $K_\grp^*$ denotes the completion of $K$ at $\grp$ and
$(L\otimes_K K_\grp^*)_{\rm ss}$ denotes the semi-simplification
of $L\otimes_K K_\grp^*$.  
\end{thm}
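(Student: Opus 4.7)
The plan is to show that the maximal semi-simple quotient $R_{\rm ss} := (L \otimes_K K_\grp^*)_{\rm ss}$ is canonically isomorphic as a $K_\grp^*$-algebra to $\prod_{\grP | \grp} L_\grP^*$, where $L_\grP^*$ denotes the completion of $L$ at the valuation $v_\grP$. Granting this, since each $L_\grP^*/K_\grp^*$ is an extension of complete discrete valuation fields with ramification index $e_\grP$ and residue degree $f_\grP$, one has $[L_\grP^*:K_\grp^*] = e_\grP f_\grP$, and the identity \parref{eq:1.1} follows upon summing.

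Let $R = L \otimes_K K_\grp^*$. Since $R$ is a commutative $K_\grp^*$-algebra of dimension $[L:K]$, it is Artinian and decomposes as $R = \prod_\alpha R_\alpha$ into finitely many local Artinian factors with residue fields $M_\alpha := R_\alpha/\grm_\alpha$ finite over $K_\grp^*$, giving $R_{\rm ss} = \prod_\alpha M_\alpha$. On the other side, the nonzero ideal $\grp B$ of the Dedekind domain $B$ admits a finite prime factorization $\grp B = \prod_\grP \grP^{e_\grP}$ (the fact $\grp B \ne B$ follows by writing $1=\sum a_i b_i$ with $a_i\in\grp$, $b_i\in B$ and noting that $A' := A[b_1,\ldots,b_n]$ is module-finite over $A$ with $\grp A'=A'$, contradicting the classical theory for finite extensions), so $\{\grP | \grp\}$ is finite and nonempty. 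The pairwise inequivalent valuations $v_\grP$ on $L$ all extend $v_\grp$, so by weak approximation $L$ is dense in $\prod_\grP L_\grP^*$; since every $K_\grp^*$-subspace of a finite-dimensional $K_\grp^*$-vector space is closed, the natural map $\phi: R \to \prod_\grP L_\grP^*$ is surjective. Its target is a product of fields, hence semi-simple, so $\phi$ factors through $R_{\rm ss}$ and produces a surjection $\prod_\alpha M_\alpha \twoheadrightarrow \prod_\grP L_\grP^*$.

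It remains to show this surjection is an isomorphism by bijecting local factors with primes above $\grp$. For each $\grP$, the composite $R \to L_\grP^*$ is a ring map from a finite product to a field, so it factors through a unique factor $R_{\alpha(\grP)}$ and induces an isomorphism $M_{\alpha(\grP)} \isoto L_\grP^*$; the assignment $\grP \mapsto \alpha(\grP)$ is injective, for otherwise $R_{\alpha(\grP)}$ would surject onto $L_\grP^* \times L_{\grP'}^*$ through its residue field $M_{\alpha(\grP)}$, which is dimensionally too small. Conversely, for each $\alpha$, the embedding $L \hookrightarrow M_\alpha$ together with the unique valuation on $M_\alpha$ restricts to a valuation on $L$ whose valuation ring in $L$ contains the integral closure $B_\grp = (A \setminus \grp)^{-1}B$ of $A_\grp$ in $L$; since $B_\grp$ is a semilocal Dedekind domain with maximal ideals precisely $\{\grP B_\grp : \grP | \grp\}$, this valuation must equal $v_\grP$ for a unique $\grP$. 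Then $L_\grP^*$ sits inside $M_\alpha$ as the closure of $L$, and simultaneously $M_\alpha$---being the surjective image of $R$, which is generated as $K_\grp^*$-algebra by the image of $L$---is contained in $L_\grP^*$; hence $M_\alpha = L_\grP^*$. The main obstacle is exactly this final identification, ruling out a strictly larger residue field $M_\alpha \supsetneq L_\grP^*$; the resolution is the observation that any ring-theoretic quotient of the $L$-generated algebra $R$ is itself generated by the image of $L$, hence contained in the closure of that image.
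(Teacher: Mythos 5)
Your argument is correct, and it takes a more self-contained route than the paper's for the key structural step. The paper proves \eqref{eq:1.1} via Lemmas~\ref{lm:fe} and~\ref{lm:fe2}: first $\sum_{\grP|\grp}e_\grP f_\grP=\dim_{k(\grp)}B/\grp B$ by localizing at $\grp$, applying the Chinese Remainder Theorem and filtering by powers of the $\grP_i$, and then passing to completions, quoting Serre for $e_if_i=[L_{\grP_i}^*:K_\grp^*]$ and Bourbaki (Proposition~\ref{prop:E.2}) for the isomorphism $\prod_i L_{\grP_i}^*\simeq(L\otimes_K K_\grp^*)_{\rm ss}$ (the canonical map is surjective with kernel the Jacobson radical). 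You bypass $B/\grp B$ entirely and instead prove that structural isomorphism directly in the discrete case: Artinian factor decomposition of $L\otimes_K K_\grp^*$, surjectivity of $\phi$ by weak approximation together with closedness of finite-dimensional $K_\grp^*$-subspaces, and a matching of local factors with the primes above $\grp$ by restricting the unique valuation of each residue field $M_\alpha$ to $L$ and locating its center on the semilocal Dedekind ring $B_\grp$; this also makes explicit the bijection between primes over $\grp$ and extensions of $v_\grp$, which the paper leaves implicit when invoking Proposition~\ref{prop:E.2}. The concluding input $[L_\grP^*:K_\grp^*]=e_\grP f_\grP$ for complete discretely valued fields is the same fact the paper cites from Serre, and your reliance on $B$ being Dedekind (Krull--Akizuki) is a standing assumption in the paper as well. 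What the paper's route buys is the intermediate identity $\sum_{\grP|\grp} e_\grP f_\grP=\dim_{k(\grp)}B/\grp B$ of Lemma~\ref{lm:fe}, which is reused in the proof of Theorem~\ref{thm:2}; what yours buys is independence from the general valuation-theoretic machinery behind Proposition~\ref{prop:E.2}. Two small points to tighten: injectivity of $\grP\mapsto\alpha(\grP)$ is most cleanly seen from the fact that a field has no quotient with zero divisors (your dimension count is valid, but only after the isomorphisms $M_{\alpha(\grP)}\isoto L_\grP^*$ are in hand), and surjectivity of $\grP\mapsto\alpha(\grP)$ requires the explicit remark that the quotient map $R\to M_\alpha=L_\grP^*$ coincides with the $\grP$-component of $\phi$, since both are $K_\grp^*$-algebra maps agreeing on $L$ and $R$ is generated by $L$ over $K_\grp^*$ --- which is exactly the generation observation you record at the end.
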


We apply Theorem~\ref{thm:1} and prove the following new result.


\begin{thm}\label{thm:2}
  Let $A$ be a Dedekind domain with quotient field $K$. Then the
  following statements are equivalent. 
  \begin{enumerate}
  \item $A$ is excellent.
  \item $A$ is quasi-excellent.
  \item $A$ is a Nagata ring.
  \item $A$ is universally Japanese.
  \item $A$ is a Japanese ring.
  \end{enumerate}
\end{thm}

We shall recall (quasi-)excellent rings, Nagata rings and
(universally) Japanese rings, and some properties 
in Section~\ref{sec:02} and refer to \cite{matsumura:ca80} for more 
details.
In particular, one has the following well-known relations:
\[
  \begin{split}
    \text{(excellent rings)} & \implies  \text{(quasi-excellent rings)}
       \implies  \text{(Nagata rings)} \iff \\ 
       & \text{(Noetherian universally Japanese rings)}\implies
       \text{(Japanese rings)}.     
  \end{split}
  \] 
By Theorem~\ref{thm:2}, when $A$ is a Dedekind domain,  
the above classes of rings are actually the same. In particular, 
excellent Dedekind domains are the same as Japanese Dedekind domains,
while the definition of the latter is much simpler. We remark that 
excellent Dedekind domains 
play a subtle role in the
construction of moduli spaces: particularly in Artin's
approximation theorem \cite{artin:approx1} and   
the existence of N\'eron models \cite[10.2, Theorem 2,
p.~297]{BLR:neron}. 
Thus, for some arithmetic applications
Theorem~\ref{thm:2} provides a simpler way to access  
excellent Dedekind domains.  



For a property $\bbP$ of commutative rings, we say $A$ is locally
$\bbP$ if the localization $A_\grp$ at $\grp$ for every $\grp\in \Spec A$
satisfies the property $\bbP$. Applying Theorem~\ref{thm:2} to the
case where $A$ 
is a discrete valuation ring, we obtain the following

\begin{cor}\label{cor:3}
  Let $A$ be a Dedekind domain with quotient field $K$. Then the
  following statements are equivalent. 
  \begin{enumerate}
  \item $A$ is locally excellent.
  \item $A$ is locally quasi-excellent.
  \item $A$ is a locally Nagata ring.
  \item $A$ is locally universally Japanese.
  \item $A$ is a locally Japanese ring.
  \end{enumerate}
\end{cor}

R. Heitmann shows that there is a locally Nagata principal ideal
domain that is not Nagata. Therefore, the equivalent classes of rings
in Theorem~\ref{thm:2} are strictly stronger than the equivalence
classes of rings in Corollary~\ref{cor:3}. Note that 
when the equality in \eqref{eq:fe} holds, Theorem~\ref{thm:1} 
gives a direct connection between $G$-rings and
locally Japanese rings; see Proposition~\ref{prop:fe_g}. 
This is how Theorem~\ref{thm:2} is proved.

We explain where \eqref{eq:1.1} comes from. 
Let us fix a nonzero prime ideal $\grp$ of $A$. On $B$ we have two linear
topologies induced by two systems of the subgroups $\{\grp^n \Lambda \}_n$
and $\{\grp^n B\}_n$, respectively, 
where $\Lambda$ is a free finite $A$-submodule of full
rank in $B$, and they agree if the localization $B_\grp$ at $\grp$ 
is finite over $A_\grp$ (and as we will
see from Theorem~\ref{thm:1} that this is actually ``if and only if''). 
The completion of $B$ for the first topology gives 
$B\otimes_A A^*_\grp$, 
which has rank $[L\otimes_K K_\grp^*:K_\grp^*]=[L:K]$ over $A_\grp^*$. 
The completion for the second topology gives
$B_\grp^*=\prod_{\grP|\grp} B_{\grP}^*$, which has rank
$\sum_{\grP|\grp} e_\grP f_\grP$, cf.~the proof of Lemma~\ref{lm:fe}. 
It is clear that there is a
surjective map from $B\otimes_A A^*_\grp$ to $B_\grp^*$,  and using
the valuation theory (see \cite{bourbaki:ca}, also
see~Section~\ref{sec:03})
we show that $B_\grp^*$
is exactly the reduced ring of $B\otimes_A A^*_\grp$. 
The rank of the latter one is
equal to $[(L\otimes_K K_\grp^*)_{\rm ss}: K_\grp^*]$.


  

This paper is organized as follows. In Section~\ref{sec:02} we recall several
rings mentioned above and their relations. 
The proofs of
Theorems~\ref{thm:1} and~\ref{thm:2} are given in Section~\ref{sec:04}.

\section{Japanese, Nagata, (quasi-)excellent and $G$-rings}
\label{sec:02}

In this section we recall the definition of several fundamental rings
in Introduction and their relations.
Our references are Matsumura 
\cite{matsumura:ca80}, and EGA IV \cite{ega4:20,ega4:24}, also
cf.~\cite[Section 2]{yu:mo}. 
All rings and algebras in this section are commutative with identity.


\subsection{Japanese, universally Japanese and Nagata rings}
\label{sec:21}

\begin{defn}\label{def:N}
  Let $A$ be an integral domain with quotient field $K$.
\begin{enumerate}
  \item  We say that {\it $A$ is N-1} if the integral closure $A'$ of
    $A$ in 
    its quotient field $K$ is a finite $A$-module. 
  \item We say that {\it $A$ is N-2} if for any finite field extension
    $L$ over $K$, the integral closure $A_L$ of $A$ in $L$ is a finite
    $A$-module.   
\end{enumerate}
\end{defn}

The first non-N-1 one-dimensional integral domain was constructed by
Akizuki \cite{akizuki}, cf.~\cite{reid:uca}. K.~Schmidt
\cite{schmidt:1936} and respectively Nagata (Appendix: Examples of bad
Noetherian rings of \cite{nagata:local}) constructed different
Dedekind domains which are not N-2.  

\def\Max{{\rm Max}}

\begin{defn}\label{def:nagata}
  A ring $A$ is said to be {\it Nagata} if 
\begin{enumerate}
  \item $A$ is Noetherian, and  
  \item $A/\grp$ is N-2 for any prime ideal $\grp$ of $A$. 
\end{enumerate}
\end{defn}

Nagata rings are the same
as what are called Noetherian universally Japanese rings in EGA IV
\cite[23.1.1, p.~213]{ega4:20}.  

\begin{defn}\
\begin{enumerate}
\item An integral domain $A$ is said to be {\it Japanese} if it is N-2.
\item A ring $A$ is said to be {\it Japanese}
if for any minimal prime ideal $\grp$ of $A$, the quotient domain
$A/\grp$ is N-2.    
\item A ring $A$ is said to be {\it universally Japanese} if any finitely
  generated integral domain over $A$ is Japanese.  
\end{enumerate}
\end{defn}

We clarify two notions of local finiteness of modules as follows.
\begin{defn}
  Let $M$ be a module over a commutative ring $A$.

{(1)} We say that $M$ is \emph{Zariski-locally finitely generated}
if for any prime $\grp\in \Spec A$, there is an element $f\in A$ such
that $f\not\in \grp$ and $M_f$ is a finite $A_f$-module.

{(2)} We say that $M$ is \emph{locally finitely generated}
if for any prime $\grp\in \Spec A$, the localization $M_\grp$ at
$\grp$ is a finite $A_\grp$-module.
\end{defn}

One easily shows that if $M$ is Zariski-locally finitely generated,
then $M$ is finitely generated. Indeed, for each maximal ideal $\grm$
of $A$ there is an element $f\in A$ such that $\grm \not\in f$ and
$M_f$ is a finite $A_f$-module. Since $\Spec A$ is quasi-compact,
there exist elements $f_1, \dots, f_n \in A$ such that $(f_1,\dots,
f_n)A=A$ and each $M_{f_i}$ is a finite $A_{f_i}$-module. Let
$S_i\subset M$ is a finite set of generators of  $M_{f_i}$ over 
$A_{f_i}$. Then the union of all $S_i$ is a finite set of generators
of $M$ over $A$.      

\begin{lemma}\label{Z-loc-Japan}
  If for any maximal ideal $\grm \in \Max (A)$, there exists an
  element $f\in A$ such that $f\not\in \grm $ and $A_f$ is Japanese,
  then $A$ is Japanese.   
\end{lemma}
\begin{proof}
  Replacing $A$ by $A/\grp$ for each minimal prime $\grp$, we may
  assume that $A$ is an integral domain. Let $L/K$ be a finite field
  extension with integral closure $B$ of $A$ in $L$. 
  Since the construction of normalization commutes with localization, 
  our assumption
  implies that for any $\grm\in \Max(A)$, there exists an element
  $f\in A$ such that $f \not\in \grm$ and that $B_f$ is a finite
  $A_f$-module. That is, $B$ is Zariski-locally finitely generated and
  hence $B$ is finitely generated. \qed  
\end{proof}

\begin{remark} 
  The proof of Lemma~\ref{Z-loc-Japan}
  does not show that any locally Japanese ring (that is, its
  localization at every prime is Japanese) is Japanese. 
  The following is an example, 
  due to Nagata, 
  of a locally finitely generated module which
  is not finitely generated. 
  Let $k$ be a field of
  characteristic $p>0$ such that $[k:k^p]=\infty$. Let
  $R:=k^p[[x,y]][k]$, where $x$ and $y$ are indeterminates. 
  Then the completion $R^*$ of $R$ at the maximal
  ideal $(x,y)$ is equal to $k[[x,y]]$ and $R\subsetneq R^*$. Let
  $b_1, \dots, b_n, \dots$ be a sequence of $p$-independent elements
  in $k$, and $p_1,\dots, p_m, \dots$ be 
  mutually non-associative prime elements (i.e.~$(p_i)\neq (p_j)$
  for $i\neq j$). Put $q_n:=p_1\cdots p_n$. Let $c:=\sum_{i=1}^\infty
  b_i q_i \in R^*$ and let $T:=R[1/x][c]$ whose normalization is denoted
  by $T'$. Then for every prime ideal $\grp$ of $T$, the localization 
  $T'_\grp$ is a finite $T_\grp$-module while $T'$ is not finite over
  $T$; see \cite[A1.Example 8, p.~211]{nagata:local} for more details.   
\end{remark}

\subsection{G-rings, closedness of singular loci and excellent rings}
\label{sec:32}

\begin{defn}(\cite[\S\,33, p.~249]{matsumura:ca80}). 
\begin{enumerate}
  \item Let $A$ be a Noetherian ring containing a field $k$. We say
    that $A$ is {\it geometrically regular over $k$} if for any finite
    field extension $k'$ over $k$, the ring $A\otimes_k k'$ is 
    regular \cite[p.~78]{matsumura:ca80}. 
This is equivalent to say that the local ring $A_\grm$
    has the same property for all maximal ideals $\grm$ of $A$.
  \item Let $\phi:A\to B$ be a homomorphism (not necessarily of finite
    type) of Noetherian rings. We say that $\phi$ is {\it regular} if
    it is 
    flat and for
    each $\grp\in \Spec A$, the fiber ring $B\otimes_A k(\grp)$ is
    geometrically regular over the residue field $k(\grp)$. 
  \item A Noetherian ring $A$ is said to be a {\it G-ring} if for each $\grp
    \in \Spec A$, the natural map 
    $\phi_\grp: A_\grp\to (A_\grp)^*$ is regular,
    where $(A_\grp)^*$ denotes the completion of the local ring
    $A_\grp$. 
\end{enumerate}
\end{defn}

Note that the natural map $\phi_\grp: A_\grp\to (A_\grp)^*$ is faithfully
flat. The fibers of the natural morphism $\Spec (A_\grp)^*\to \Spec
A_\grp$ are called \emph{formal fibers}. To say a Noetherian ring $A$ is a
$G$-ring then is equivalent to saying that all formal fibers of the
canonical map $\phi_\grp$ for each prime ideal $\grp$ of $A$ 
are geometrically regular. 
It is clear that, if $A$ is a G-ring, then any localization $S^{-1}A$
of $A$ and any homomorphism image $A/I$ of $A$ are G-rings.


\begin{lemma}\label{lm:geom-reg}
  Let $K/k$ be any field extension. Then $K$ is geometrically regular
  over $k$ if and only if $K/k$ is separable.    
\end{lemma}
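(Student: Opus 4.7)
The plan is to reduce the statement to the classical Mac Lane separability criterion, which characterizes separability of $K/k$ as the reducedness of $K \otimes_k k'$ for every finite extension $k'/k$. The only new ingredient is the observation that, when $K$ is a field, regularity of the finite-dimensional $K$-algebra $K \otimes_k k'$ is equivalent to its reducedness.

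First I would observe that, for any finite extension $k'/k$, the ring $R := K \otimes_k k'$ is a free $K$-module of rank $[k':k]$. Hence $R$ is a finite-dimensional, in particular Artinian, $K$-algebra of Krull dimension zero. In such a ring every prime ideal is both minimal and maximal, and $R$ is regular if and only if each localization $R_\grm$ is a regular local ring of dimension zero, that is, a field. This is precisely the condition that $R$ be reduced (an Artinian ring is reduced if and only if it is a finite product of fields). Therefore $K$ is geometrically regular over $k$ in the sense of the definition above if and only if $K \otimes_k k'$ is reduced for every finite extension $k'/k$.

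Second, I would invoke the Mac Lane separability criterion (see for instance \cite[\S\,27]{matsumura:ca80}): a field extension $K/k$ is separable if and only if $K \otimes_k k'$ is reduced for every finite extension $k'/k$. In characteristic zero this holds automatically, since every finite $k'/k$ has the form $k[x]/(f)$ with $f$ separable, so $K \otimes_k k' = K[x]/(f)$ splits as a product of fields. In characteristic $p$ it suffices, by the usual linear-disjointness reduction, to verify the reducedness condition for the purely inseparable finite subextensions of $k^{1/p^\infty}/k$. Combining this with the first step yields the claimed equivalence.

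I do not anticipate any real obstacle; the serious content is in the Mac Lane criterion, which I would cite rather than reprove. The only care needed is to note that, because $K$ is a field, the ring $K \otimes_k k'$ has dimension zero, so that the distinction between regularity and reducedness collapses.
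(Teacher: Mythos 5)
Your proof is correct and follows essentially the same route as the paper, which likewise rests on the equivalence ``$K/k$ separable $\iff$ $K\otimes_k k'$ reduced for all finite $k'/k$ $\iff$ $K\otimes_k k'$ regular.'' You merely spell out the two ingredients the paper leaves implicit, namely the Mac Lane criterion and the fact that for a zero-dimensional (Artinian) $K$-algebra regularity and reducedness coincide.
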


The field extension $K/k$ is separable if and only if
for any finite field extension $k'/k$, the tensor product $k'\otimes_k
K$ is reduced or equivalently that $k'\otimes_k K$ is regular. This
proves the lemma.  

\def\reg{{\rm Reg}}

For a Noetherian scheme $X$, let $\reg (X)$ denote the subset of $X$
that consists of regular points, which is called the regular locus of $X$.
 
\begin{defn} Let $A$ be a Noetherian ring. 
  \begin{enumerate}
  \item We say that $A$ is {\it J-0} if $\reg (\Spec A)$ contains a
    nonempty open set of $\Spec A$.
  \item We say that $A$ is {\it J-1} if $\reg (\Spec A)$ is open in  
    $\Spec A$.
  \end{enumerate}
\end{defn}

\begin{thm}\label{j2} \cite[Theorem 73, p.~246]{matsumura:ca80}.
  For a Noetherian ring $A$, the following conditions are equivalent:
  \begin{enumerate}
  \item[(a)] any finitely generated $A$-algebra $B$ is J-1;
  \item[(b)] any finite $A$-algebra $B$ is J-1;
  \item[(c)] for any $\grp\in \Spec A$, and for any finite radical
    extension $K'$ of $k(\grp)$, there exists a finite $A$-algebra
    $A'$ satisfying $A/\grp\subseteq A'\subseteq K'$ which is J-0 and
    whose quotient field is $K'$. 
  \end{enumerate}
\end{thm}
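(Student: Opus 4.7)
The plan is to establish the cyclic implications (a) $\Rightarrow$ (b) $\Rightarrow$ (c) $\Rightarrow$ (a). The first implication is immediate, since any finite $A$-algebra is finitely generated. For (b) $\Rightarrow$ (c), given $\grp\in\Spec A$ and a finite radical extension $K'/k(\grp)$, pick generators $\xi_1,\dots,\xi_n$ of $K'$ over $k(\grp)$; after multiplying each by an appropriate nonzero element of $A/\grp$ we may assume each $\xi_i$ is integral over $A/\grp$. Then $A':=(A/\grp)[\xi_1,\dots,\xi_n]\subseteq K'$ is a finite $A$-algebra with $\Frac(A')=K'$, hence J-1 by (b); since $A'$ is a domain, its generic point is the field $K'$, which is regular, so $\reg(\Spec A')$ is nonempty and $A'$ is J-0.

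For the main direction (c) $\Rightarrow$ (a), I would first carry out two layers of reduction. Using that regular local rings are integral domains, every regular point of $\Spec B$ lies on a unique irreducible component, so $\reg(\Spec B)$ decomposes along the minimal primes of $B$ and it suffices to treat each $B/\grq$ separately; thus we may assume $B$ is a finitely generated $A$-algebra which is an integral domain. Next, by Noetherian induction on $\dim\Spec B$, it is enough to prove the weaker statement that every such $B$ is J-0: once $\reg(\Spec B)$ is known to contain a nonempty open subset $U$, the reduced structure on the complement $\Spec B\setminus U$ is a finite union of finitely generated $A$-domains of strictly smaller dimension, and the inductive hypothesis handles them.

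The core of the argument is then to show that if $B$ is a finitely generated $A$-domain, then $\reg(\Spec B)$ is nonempty. Set $\grp=\ker(A\to B)$ and $L=\Frac(B)$, and choose a transcendence basis $t_1,\dots,t_d\in B$ of $L$ over $k(\grp)$. Let $K_0=k(\grp)(t_1,\dots,t_d)$ and let $K_1$ be the separable algebraic closure of $K_0$ in $L$, so that $L/K_1$ is a finite radical extension. The strategy is to apply condition (c) to a suitable prime of $A$ in order to extract a J-0 finite $A$-subalgebra $A'$ whose fraction field captures the radical piece $L/K_1$, then combine it with the polynomial subring $A[t_1,\dots,t_d]\subseteq B$ and with a model of the separable extension $K_1/K_0$ to build a finitely generated $A$-subalgebra $B_0\subseteq L$ that is generically regular and such that $B_0\hookrightarrow B$ is birational. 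A standard generic-smoothness and openness-of-regular-locus argument, together with the lemma on faithfully flat regular homomorphisms stated earlier in this section, then transports a nonempty regular open subset of $\Spec B_0$ into $\Spec B$.

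The hard part will be the final step above: condition (c) as stated furnishes a J-0 subalgebra only for radical extensions of residue fields of primes of $A$ itself, whereas one needs to control the radical extension $L/K_1$, which lives ``two layers up'' via the chain $k(\grp)\subseteq K_0\subseteq K_1\subseteq L$. Bridging this gap requires first transporting (c) across the purely transcendental extension $K_0/k(\grp)$ and the finite separable extension $K_1/K_0$, and then proving that regularity on the separate pieces assembles into regularity on a dense open of $\Spec B$. This is the technical heart of Matsumura's argument in \cite[\S\,33]{matsumura:ca80}, to which I would refer for the detailed verification.
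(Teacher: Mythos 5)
The paper itself gives no argument for this statement: it is quoted verbatim from Matsumura (Theorem 73, p.~246), so your attempt can only be measured against the standard proof there. Your implications (a)$\Rightarrow$(b) and (b)$\Rightarrow$(c) are correct and are essentially Matsumura's: clear denominators to make the generators of $K'$ integral over $A/\grp$, obtaining a module-finite domain $A'$ with $\Frac(A')=K'$, and note that the generic point of $\Spec A'$ is regular, so J-1 plus nonemptiness of $\reg(\Spec A')$ gives J-0.

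The genuine gap is in (c)$\Rightarrow$(a), in the step where you claim that, by Noetherian induction, it suffices to prove J-0 for finitely generated $A$-domains. Knowing that the reduced complement $Z=\Spec B\setminus U$ is a union of lower-dimensional finitely generated $A$-domains, and that (inductively) their regular loci are open, says nothing about which points of $Z$ are regular points of $B$: in general $\reg(\Spec B)\cap Z\neq \reg(Z)$. For instance, for the cuspidal curve $B=k[x,y]/(y^2-x^3)$ with $U=D(x)\subseteq\reg(\Spec B)$, the complement is a single reduced point, regular as a scheme in its own right, yet not a regular point of $\Spec B$; so the induction does not close. The correct bridge, and the actual content of Matsumura's proof, is Nagata's openness criterion (the lemma preceding Theorem 73): $\reg(\Spec B)$ is open if and only if for every $\grq\in\Spec B$ with $B_\grq$ regular, $\reg(\Spec B)$ contains a nonempty open subset of $V(\grq)$; one then spreads out a regular system of parameters of $\grq B_\grq$ to show that regularity of $B_\grq$ together with generic regularity of $B/\grq$ forces $B$ to be regular on a dense open of $V(\grq)$, and it is in proving that $B/\grq$ is J-0 that hypothesis (c) enters, via a finite radical extension $k'$ of $k(\grp)$ over which $\Frac(B/\grq)$ becomes separably generated (not via a radical extension of the intermediate field $K_1$ as in your setup). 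Since you explicitly defer exactly this part to Matsumura, the essential content of (c)$\Rightarrow$(a) is not supplied, and the reduction you do supply is flawed; a similar (repairable) conflation occurs in your first reduction to domains, where regularity of $B$ at a point on a unique component is not equivalent to regularity of $B/\grq$ there unless one also cuts out the support of $\grq$ to kill nilpotents.
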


\begin{defn} A Noetherian ring $A$ is said to be {\it J-2} if it
  satisfies one 
  of the equivalent conditions in Theorem~\ref{j2}.
\end{defn}

\begin{lemma}\label{lm:japan-j2}
  Any Noetherian Japanese ring $A$ of dimension one is J-2.  
\end{lemma}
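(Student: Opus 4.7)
The plan is to verify J-2 through criterion (c) of Theorem~\ref{j2}: for every $\grp \in \Spec A$ and every finite radical extension $K'$ of $k(\grp)$, exhibit a finite $A$-algebra $A'$ with $A/\grp \subseteq A' \subseteq K'$ that is J-0 and has quotient field $K'$. The key structural observation, which makes the one-dimensional hypothesis decisive, is that every prime ideal $\grp$ of $A$ is either maximal or minimal (or both), so the verification splits into two short cases.

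In the first case, suppose $\grp$ is maximal but not minimal. Then $A/\grp = k(\grp)$ is already a field, so I take $A' := K'$. As a finite field extension of $A/\grp$ this is a finite $A$-algebra; its spectrum is a single point on which $A'$ is a field, hence trivially regular, so $A'$ is J-0 with the required quotient field.

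In the second case, $\grp$ is a minimal prime. The Japanese hypothesis says $A/\grp$ is N-2, so I would take $A'$ to be the integral closure of $A/\grp$ inside $K'$. By N-2 applied to the finite extension $K'/k(\grp)$, the ring $A'$ is a finite module over $A/\grp$, hence a finite $A$-algebra; by construction it is a normal domain with quotient field $K'$. Because finite integral extensions preserve Krull dimension, $\dim A' \le 1$, so $A'$ is either a field (dimension zero) or a one-dimensional Noetherian normal domain, i.e.\ a Dedekind domain (dimension one). In both subcases every localization of $A'$ at a prime is regular, so $\reg(\Spec A') = \Spec A'$ is (vacuously) open and $A'$ is J-0 (even J-1).

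These two cases exhaust all primes, so criterion (c) of Theorem~\ref{j2} holds and $A$ is J-2. I do not expect a real obstacle: the argument is essentially a reduction of J-2 to two classical inputs, namely that one-dimensional Noetherian normal domains are Dedekind (hence regular), and that the N-2 property supplied by the Japanese hypothesis controls the finiteness of the normalization in $K'$. The only mild subtlety is recognizing that no separate treatment is needed for primes that are simultaneously minimal and maximal, since both constructions then coincide with $A' = K'$.
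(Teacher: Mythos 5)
Your proof is correct and takes essentially the same route as the paper: verify criterion (c) of Theorem~\ref{j2}, treating separately the primes $\grp$ with $A/\grp$ a field (where $A'=K'$ works trivially) and the minimal primes, where the N-2 hypothesis makes the integral closure of $A/\grp$ in $K'$ a finite $A$-algebra that is a field or a Dedekind domain, hence regular and in particular J-0.
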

\begin{proof}
  For each $\grp\in \Spec A$, the quotient domain
  $A/\grp$ is either a field or a
  Noetherian Japanese domain of dimension one. In the first case,
  the condition (c) holds trivially by taking $A'=K'$.
  In the second case, the integral closure $A'$ of $A$ in
  $K'$ is finite over $A$ and is a Dedekind domain, which particularly
  is J-0. 
  Therefore, $A$ is J-2. \qed
\end{proof}



\begin{thm}\label{g1}\
  \begin{enumerate}
  \item Any complete Noetherian local ring is a G-ring. 
  \item If for any maximal ideal $\grm$ of a Noetherian ring $A$, the
    natural map $A_\grm\to (A_\grm)^*$ is regular, then $A$ is a
    G-ring
  \item Let $A$ and $B$ be Noetherian rings, and let $\phi:A\to B$ be
    a faithfully flat and regular homomorphism. If $B$ is J-1, then
    so is $A$.
  \item Any semi-local G-ring is J-1.    
  \end{enumerate}
\end{thm}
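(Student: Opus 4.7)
The plan is to prove the four parts in order, with (1) carrying essentially all the substance and (2)--(4) following by formal maneuvers. For (1), I would invoke Cohen's structure theorem to realize a complete Noetherian local ring $A$ as a quotient of a regular complete local ring $R$, either a formal power series ring over a field or over a Cohen ring. Since the G-ring property is inherited by quotients and localizations, as already noted in \S\ref{sec:32}, it suffices to show that $R$ itself is a G-ring; the content is verifying geometric regularity of the formal fibers of $R_\grp$ for each prime $\grp$, which after unwinding reduces to a separability question for residue field extensions in positive characteristic. This is where the real work lies and is carried out in detail in \cite[\S\,33]{matsumura:ca80}.

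For (2), given an arbitrary prime $\grp$ of $A$, I would fix a maximal ideal $\grm \supseteq \grp$ and express the formal fibers of $A_\grp \to (A_\grp)^*$ in terms of those of the assumed-regular map $A_\grm \to (A_\grm)^*$ via localization and base change, both of which preserve geometric regularity. For (3), the essential step is the local form of the lemma on faithfully flat regular homomorphisms stated earlier: for any $\grq \in \Spec B$ lying over $\grp \in \Spec A$, the induced map $A_\grp \to B_\grq$ is faithfully flat with geometrically regular fiber, so $A_\grp$ is regular if and only if $B_\grq$ is. This yields $(\Spec \phi)^{-1}(\reg(\Spec A)) = \reg(\Spec B)$, and since $\phi$ is faithfully flat between Noetherian rings the morphism $\Spec \phi$ is a topological submersion (fpqc), so openness of $\reg(\Spec B)$ forces $\reg(\Spec A)$ to be open.

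For (4), let $A$ be semi-local with maximal ideals $\grm_1,\dots,\grm_n$, and consider the canonical map $\phi: A \to \wh A := \prod_i (A_{\grm_i})^*$, which is faithfully flat (by semi-locality) and regular (by the G-ring hypothesis). Each factor $(A_{\grm_i})^*$ is a complete Noetherian local ring, hence, by a classical theorem of Grothendieck, excellent, and in particular J-1; a finite product of J-1 rings is J-1, so $\wh A$ is J-1, and part (3) then gives the J-1 property for $A$. The main obstacle throughout is part (1): the geometric-regularity analysis of formal fibers of power-series rings over possibly imperfect ground fields is the single substantive computation, while the remaining parts reduce to formal descent and localization arguments once (1) is in hand.
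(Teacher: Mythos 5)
Your parts (1), (3) and (4) are essentially the standard arguments behind the citations the paper gives for this theorem (the paper's own ``proof'' is only a reference to Matsumura, Theorems 68, 75, 76). For (1) your reduction via Cohen's structure theorem and the stability of the G-property under quotients is the usual one, and you defer the substantive formal-smoothness/separability analysis to the literature exactly as the paper does. Part (3) is correct and cleanly argued: the local criterion ``flat local map with regular base and geometrically regular closed fiber has regular source iff regular target'' gives $(\Spec\phi)^{-1}(\reg(\Spec A))=\reg(\Spec B)$, and since $\Spec B\to\Spec A$ is faithfully flat and affine (hence quasi-compact) it is submersive, so openness descends. Part (4) is the standard proof; invoking excellence (hence the J-1 property) of complete Noetherian local rings is legitimate, as the standard proofs of that theorem do not pass through statement (4), and you correctly check that $A\to\prod_i(A_{\grm_i})^*$ is faithfully flat and regular, its fibers being finite products of formal fibers of the $A_{\grm_i}$.

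The genuine gap is in (2). You propose to ``express the formal fibers of $A_\grp\to(A_\grp)^*$ in terms of those of $A_\grm\to(A_\grm)^*$ via localization and base change.'' That mechanism fails: $(A_\grp)^*$ is the $\grp A_\grp$-adic completion of the localization and is not obtained from $(A_\grm)^*$ by localization or base change. Already for $A=k[x,y]_{(x,y)}$ and $\grp=(x)$, the generic formal fiber of $A_\grp$ is a field of the shape $k(y)((x))$ over $k(x,y)$, while the fiber of $A\to k[[x,y]]$ over $\grp$ is $k((y))$ over $k(y)$; neither is a localization or base change of the other. The correct argument genuinely uses part (1) together with a descent step: put $C=(A_\grm)^*$ and choose $\grq\in\Spec C$ lying over $\grp$ (possible by faithful flatness). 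Then $A_\grp\to C_\grq$ is flat with geometrically regular fibers (these are localizations of fibers of the assumed-regular map $A_\grm\to C$), and $C_\grq\to(C_\grq)^*$ is regular because $C$ is complete local, hence a G-ring by (1) and G-rings localize; so the composite $A_\grp\to(C_\grq)^*$ is regular. This composite factors as $A_\grp\to(A_\grp)^*\to(C_\grq)^*$ with the second arrow a faithfully flat local homomorphism, and geometric regularity of the fibers of $A_\grp\to(A_\grp)^*$ then follows by faithfully flat descent of regularity (the direction of the Lemma quoted from Matsumura, p.~251, that needs only faithful flatness). So (2) is not a formal localization/base-change statement; it requires (1) and descent, which is how the cited Theorem 75 is actually proved. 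Your proposal as written omits exactly this step.
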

\begin{proof}
  (1) See \cite[Theorem 68, p.~225 and p.~250]{matsumura:ca80}. (2)
      See \cite[Theorem 75, p.~251]{matsumura:ca80}. (3) and (4) See 
      \cite[Theorem 76, p.~252]{matsumura:ca80}. 
\end{proof}

\begin{thm}\label{g2}\
  \begin{enumerate}
  \item Let $A$ be a G-ring and $B$ a finitely generated
    $A$-algebra. Then $B$ is a G-ring.
  \item Let $A$ be a G-ring which is J-2. Then $A$ is a Nagata ring.
  \end{enumerate}
\end{thm}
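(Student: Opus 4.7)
The plan is to treat the two parts in turn. For (1), I would use induction on the number of generators, exploiting that the G-ring property is preserved under quotients and localizations (immediate from the definition), to reduce to showing that $A[x]$ is a G-ring whenever $A$ is. For any prime $\grq\in\Spec A[x]$ lying over $\grp:=\grq\cap A$, the task is to verify that the formal fibers of the completion map $A[x]_\grq\to (A[x]_\grq)^*$ are geometrically regular. One factors this map through a polynomial extension and completion of $A_\grp$ and invokes transitivity of regular morphisms together with the G-ring hypothesis on $A_\grp\to (A_\grp)^*$ and the stability of geometric regularity under polynomial extensions.

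For (2), the plan is to verify that $A/\grp$ is N-2 for every prime $\grp$. Both the G-ring and J-2 properties pass to quotients (the latter via Theorem~\ref{j2}), so we reduce to showing that a G-ring domain $A$ which is J-2 is Japanese. Since the generic point of a domain is regular, J-2 automatically gives J-0 for $A$. The route I would follow is: first establish the N-1 case via the key step below; then replace $A$ by its normalization---finite by N-1, still a G-ring by (1), and still J-2 since any finitely generated algebra over a finite extension of $A$ is finitely generated over $A$---to assume $A$ is normal. Given a finite extension $L/K$, let $K'$ be the maximal separable subextension of $K$ in $L$: the separable step $K'/K$ is handled by the classical theorem on integral closures in finite separable extensions of Noetherian normal domains, and the purely inseparable step $L/K'$ is handled by invoking condition (c) of Theorem~\ref{j2} to produce a finite intermediate $A$-algebra $A'\subseteq L$ with $\Frac A'=L$ that is J-0 (and a G-ring by (1)), which reduces to the N-1 claim applied to $A'$.

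The main obstacle is this N-1 step: any G-ring domain $A'$ which is J-0 is N-1. By a gluing argument analogous to that in Lemma~\ref{locJ} (applied with the trivial extension $\Frac A'/\Frac A'$), it suffices to show $A'_\grm$ is N-1 for every maximal ideal $\grm$ of $A'$. For such a local ring, the G-ring hypothesis says the generic formal fiber $(A'_\grm)^*\otimes_{A'_\grm}\Frac(A'_\grm)$ is geometrically regular, hence reduced; combined with faithful flatness of the completion and the fact that $A'_\grm$ is a domain, $(A'_\grm)^*$ embeds into this generic fiber and so is itself reduced, i.e., $A'_\grm$ is analytically unramified. The classical theorem that a local Noetherian analytically unramified domain has finite integral closure in its fraction field then completes the N-1 step and hence the proof.
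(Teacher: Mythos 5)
The paper itself disposes of both parts by citation to Matsumura (Theorems 77 and 78 there), so what you are really offering is a reconstruction of two substantial theorems. Your outline of part (2) is essentially Matsumura's argument and is sound: the quotient reductions, the embedding of $(A'_\grm)^*$ into the regular (hence reduced) generic formal fiber to get analytic unramifiedness, the classical finiteness theorem for analytically unramified local domains to get N-1, the classical separable-case theorem, and condition (c) of Theorem~\ref{j2} for the inseparable layer. Two points need attention, though. First, your key lemma as stated carries a J-0 hypothesis that your argument never uses; it is in fact needed, and precisely in the local-to-global step you dismiss as ``gluing as in Lemma~\ref{locJ}'': from $A'_\grm$ N-1 you get a finite subalgebra $C\subseteq \widetilde{A'}$ with $C_\grm$ normal, and to produce $f\notin\grm$ with $C_f$ normal (so that $C_f=\widetilde{A'}_f$) you need openness of the normal locus of $C$, which is exactly what J-0 (a nonzero $f$ with $A'_f$ regular, hence normal, plus the openness lemma in Section 2) provides; locally N-1 at all maximal ideals alone does not formally yield N-1. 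Second, condition (c) of Theorem~\ref{j2} applies to radical extensions of residue fields of the ring in question, so for the layer $L/K'$ you must apply it to the integral closure $A_1$ of $A$ in $K'$, noting that $A_1$ is again J-2 and a G-ring; this is immediate but should be said. With these repairs, (2) is correct and is the same route as the cited proof.

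Part (1) has a genuine gap. After reducing to $B=A[x]$ and to maximal ideals $\grq$ (via Theorem~\ref{g1}(2)), the factorization you propose is $A[x]_\grq \to (A_\grp)^*[x]_{\grq'} \to (A[x]_\grq)^*$. The first leg is regular by the hypothesis on $A_\grp\to (A_\grp)^*$ together with a (true, but not formal) lemma that geometric regularity of the fibers survives the residue-field extensions $k(\grp)\subset k(P)$ occurring in the polynomial base change. But the second leg is the completion map of the Noetherian local ring $(A_\grp)^*[x]_{\grq'}$, and its regularity is precisely the statement that a polynomial ring over a complete Noetherian local ring is a G-ring --- a special case of the theorem being proved, and in fact its entire content. ``Transitivity of regular morphisms'' cannot supply it: transitivity only says a composite of regular maps is regular, whereas here the second factor is the unknown; nor does Theorem~\ref{g1}(1) apply, since $(A_\grp)^*[x]_{\grq'}$ is not complete. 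Closing this requires the genuinely hard input of Grothendieck's proof --- Cohen's structure theorem and the identification of geometric regularity with formal smoothness (the Theorem 93 quoted in Section 2), together with the localization argument for formal smoothness. As written, your sketch of (1) reproduces only the routine reductions and omits the core of the theorem.
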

\begin{proof}
  (1) See \cite[Theorem 77, p.~254]{matsumura:ca80}. 
  (2) See \cite[Theorem 78, p.~257]{matsumura:ca80}.
\end{proof}

\begin{defn}\label{def:excellent} \cite[\S\, 34, p.~259]{matsumura:ca80}.
Let $A$ be a Noetherian ring.
  \begin{enumerate}
  \item We say that $A$ is {\it quasi-excellent} if the following
    conditions are satisfied:
    \begin{itemize}
    \item [(i)] $A$ is a G-ring;
    \item [(ii)] $A$ is J-2.
    \end{itemize}
  \item We say that $A$ is {\it excellent} if it satisfies (i), (ii)
    and the following condition
    \begin{itemize}
    \item [(iii)] $A$ is universally catenary 
\cite[p.~84]{matsumura:ca80}. 
    \end{itemize}
  \end{enumerate}
\end{defn}

\begin{remark}\label{rem:catenary}
If $A$ is catenary, then so are any localization of $A$ and any 
homomorphism image of $A$. To show a Noetherian ring $A$ is 
universally catenary, it then suffices to show that every polynomial
ring  $A[X_1,\dots, X_n]$, for $n\ge 1$, is catenary.
If $A$ is Cohen-Macaulay, then $A$ is catenary and $A[X]$ is again
Cohen-Macaulay cf.~\cite[Proposition 18.9 and Corollary
18.10]{eisenbud}.  
Therefore, any Cohen-Macaulay ring is universally
catenary. Since every regular ring is Cohen-Macaulay, every regular
ring is universally catenary.   
\end{remark}

\begin{remark}\
\begin{enumerate}
\item Each of the conditions (i), (ii), and (iii) is stable under the
localization and passage to a finitely generated algebra
(Theorems \ref{j2} (1) and \ref{g2} (1)).
\item Note that (i), (ii), (iii) are conditions depending only on
  $A/\grp$, for $\grp\in
  \Spec A$. Thus a Noetherian ring $A$ is (quasi-)excellent if and
  only if so is $A_{\rm red}$.
\item The conditions (i) and (iii) are of local nature (in the sense
  that if they hold for $A_\grp$ for all $\grp\in \Spec A$, then they
  hold for $A$), while the condition (ii) is not. 
\item  Theorem~\ref{g2} (2) states that any quasi-excellent ring is a
  Nagata ring. 

\item It follows from Theorems~\ref{j2} and \ref{g1} (4) that any
  Noetherian local G-ring is quasi-excellent.


\item Nagata's example of a $2$-dimensional Noetherian local
  ring that is catenary but not universally catenary
  \cite[(14.E), p.~87]{matsumura:ca80} is a $G$-ring,
  and is also a J-2 ring as any local $G$-ring is a J-2 ring. So it is
  a quasi-excellent catenary local ring that is not excellent.

\item Rotthaus \cite{rotthaus} 
  constructed a regular local ring $R$ of dimension three
  which contains a field and which is Nagata, but not quasi-excellent.

\end{enumerate}
\end{remark}

\section{Valuations, completions and extensions}
\label{sec:03}

In this section, a group will
mean an abelian group unless stated otherwise.

\subsection{Valuations and valuation rings}
\label{sec:V.1}

For a totally ordered abelian group $\Gamma$ written additively, we
denote by $\Gamma_\infty=\Gamma\cup \{\infty\}$ the totally ordered
commutative monoid with 
$\gamma \le \infty$ for all $\gamma \in \Gamma$ and
$\gamma+\infty=\infty+\gamma=\infty$ for all $\gamma\in
\Gamma_\infty$.   

\begin{defn}\label{def:v}
  \ 

  \begin{enumerate}
  \item  An integral domain $A$ with quotient field $K$ is called a
      \emph{valuation ring} or a \emph{valuation ring of $K$} if for
      any $x\in K^\times$ either $x\in A$ or $x^{-1}\in A$.

  \item  A {\it valuation} of a field $K$ is a group homomorphism
      $v:K^\times \to \Gamma$, where $\Gamma$ is a totally ordered
      abelian group, such that 
  \begin{equation}
    \label{eq:V.1}
v(x+y)\ge \min\{v(x), v(y)\}, \quad \forall\, x,y \in K^\times.       
  \end{equation}
  We extend $v$ to a function $v:K\to \Gamma_\infty $ by putting
  $v(0)=\infty$. Clearly, the condition \eqref{eq:V.1} holds for all
  $x,y\in K$. The homomorphism image $v(K^\times)$ is called the {\it
  value group} of the valuation $v$. Clearly, 
  $A:=\{x\in K : v(x)\ge 0\}$ is a valuation ring and $\grm:=\{x\in A:
  v(x)>0\}$ is its maximal ideal. We call $A$ and $\kappa:=A/\grm$ the
  \emph{valuation ring} and \emph{residue field} of $v$,
  respectively.

\item  A valuation $v$ of $K$ is said to be \emph{discrete} if its
      value group is isomorphic to $\Z$ compatible with the orders.

\item  Two valuations $v_1$ and $v_2$ of $K$ with value groups
      $\Gamma_1$ and $\Gamma_2$ are said to be \emph{equivalent} if
      there is an isomorphism $\alpha:\Gamma_1 \isoto \Gamma_2$ of
      ordered groups such that $v_2= \alpha\circ v_1$.     
  \end{enumerate}
\end{defn}

The construction $v\mapsto A$ gives rise to a map from the set of
equivalence classes of valuations of $K$ to the set of valuation rings
of $K$. 
The reverse construction is as follows: For a given valuation ring
$(A,\grm)$, define $\Gamma:=K^\times/A^\times$ and $P:=\grm/A^\times$
the set of positive elements, then $\Gamma$ is a totally ordered group
and the natural projection $v:K^\times\to \Gamma$ is a valuation of
$K$ so that the valuation ring of $v$ is equal to $A$. It is easy to
see the above map is bijection, cf. \cite[VI, \S 3.2,
Proposition~3]{bourbaki:ca}. 

\begin{prop}\label{prop:v-prime}
  Let $A$ be a valuation ring of a field $K$.

  \begin{enumerate}
  \item  The set of primes ideals of $A$ is totally ordered by the order
      of inclusion.

  \item  If $B\supset A$ is a subring of $K$, then $B$ is a valuation
      ring and the maximal ideal
      $\grm(B)$ of $B$ is a prime ideal of $A$. Moreover, the map
      $B\mapsto \grm(B)$ is a order-reversing bijection between the
      totally ordered set of subrings of $K$ containing $A$ and the
      totally ordered set of prime ideals of $A$. The inverse map is
      given by $\grp\mapsto A_\grp$, the localization of $A$ at
      $\grp$.   
  \end{enumerate}
\end{prop}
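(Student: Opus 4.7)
The plan is to prove the two parts of Proposition~\ref{prop:v-prime} by repeatedly invoking the defining property of a valuation ring: that for every nonzero $x\in K$ at least one of $x,x^{-1}$ lies in $A$.

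For part (1), I would argue by contradiction. Suppose $\grp$ and $\grq$ are prime ideals of $A$ with $\grp\not\subseteq \grq$, and pick $x\in \grp\setminus \grq$. For any nonzero $y\in \grq$, the valuation ring property gives either $x/y\in A$ or $y/x\in A$. The first case is impossible: it would yield $x=(x/y)\cdot y\in \grq$. So $y/x\in A$, whence $y=(y/x)\cdot x\in \grp$. This shows $\grq\subseteq \grp$, so prime ideals are totally ordered.

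For part (2), I would proceed in four small steps. First, that any subring $B$ of $K$ containing $A$ is a valuation ring of $K$ is immediate from the definition, since for $x\in K^\times$ at least one of $x,x^{-1}$ already lies in $A\subseteq B$. Second, I would show $\grm(B)\subseteq A$: if $x\in \grm(B)$ were not in $A$, then $x^{-1}\in A\subseteq B$, making $x$ a unit in $B$, a contradiction. Since $\grm(B)$ is an additive subgroup closed under multiplication by $A\subseteq B$, it is an ideal of $A$, and the injection $A/\grm(B)\hookrightarrow B/\grm(B)$ into a field forces $\grm(B)$ to be prime in $A$.

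Third, I would verify that $B\mapsto \grm(B)$ and $\grp\mapsto A_\grp$ are mutually inverse. One direction, $\grm(A_\grp)\cap A=\grp$, is the standard fact about localization at a prime. The other direction, $A_{\grm(B)}=B$, is the crux of the argument. The inclusion $A_{\grm(B)}\subseteq B$ is clear because elements of $A\setminus \grm(B)$ are units in $B$. For the reverse inclusion, given $b\in B$ with $b\notin A$, the valuation ring property of $A$ gives $b^{-1}\in A$; moreover $b^{-1}$ cannot lie in $\grm(B)$ since $b=(b^{-1})^{-1}\in B$ shows $b^{-1}$ is a unit in $B$. Hence $b^{-1}\in A\setminus \grm(B)$ and $b\in A_{\grm(B)}$. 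Fourth, order reversal follows because $B_1\subseteq B_2$ implies every unit of $B_1$ is a unit of $B_2$, so $\grm(B_2)\subseteq \grm(B_1)$; combined with the established bijection, this gives an order-reversing correspondence.

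The main obstacle I anticipate is the identity $B=A_{\grm(B)}$: it is the one place where one must use simultaneously that $A$ is a valuation ring (to get $b^{-1}\in A$ when $b\notin A$) and that $\grm(B)$ consists exactly of non-units of $B$ (to rule out $b^{-1}\in \grm(B)$). Everything else is a short formal manipulation.
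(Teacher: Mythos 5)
Your proof is correct. The paper itself gives no argument here, deferring both parts to Bourbaki (VI, \S 1.2, Theorem 1(e) and VI, \S 4.1, Proposition 1 with its Corollary); what you have written out is essentially that standard argument in self-contained form, with the key step rightly identified as the identity $B=A_{\grm(B)}$, which uses both the valuation property of $A$ (to get $b^{-1}\in A$ when $b\notin A$) and the fact that $\grm(B)$ is exactly the set of non-units of $B$. Two small points you leave implicit but should make explicit: in the order-reversal step, the inclusion $\grm(B_2)\subseteq B_1$ needed to conclude $\grm(B_2)\subseteq\grm(B_1)$ comes from your second step ($\grm(B_2)\subseteq A\subseteq B_1$); and to get that the correspondence is order-reversing in both directions (and that the subrings containing $A$ are indeed totally ordered, as the statement asserts), note that $\grp\subseteq\grq$ gives $A_\grq\subseteq A_\grp$ directly, which together with part (1) and the bijection finishes the claim. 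Neither gap is serious; the argument as a whole is sound.
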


\begin{proof}
  (1) See \cite[VI, \S 1.2, Theorem~1(e)]{bourbaki:ca}. (2) See
  \cite[VI, \S 
  4.1, Proposition~1 and Corollary]{bourbaki:ca}. \qed
\end{proof}

\begin{defn}\label{isolate}
  A subgroup $H$ of an ordered group $G$ is said to be \emph{isolated}
  if the relation $0\le y\le x$ with $x\in H$ implies 
  $y\in H$.

\end{defn}

\begin{prop}\label{prop:quot}
  Let $G$ be an ordered group and $P$ the set of its positive elements.

  \begin{enumerate}
  \item 
   The kernel of an increasing homomorphism of $G$ to an
  ordered group is an isolated subgroup of $G$. 

  \item  Conversely, let $H$ be an isolated subgroup of $G$ and
  $g:G\to G/H$ the canonical homomorphism. Then $g(P)$ is the set of
  positive elements of an ordered group structure on $G/H$. Moreover,
  if $G$ is totally ordered, so is $G/H$. 
  \end{enumerate}
\end{prop}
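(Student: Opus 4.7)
The plan is to reduce both parts to the standard characterization of an ordered abelian group by its positive cone: a subset $P' \subset G'$ is the positive cone of a compatible partial order on an abelian group $G'$ if and only if (i) $0 \in P'$, (ii) $P' + P' \subseteq P'$, and (iii) $P' \cap (-P') = \{0\}$; and the order is total if and only if in addition $P' \cup (-P') = G'$. Part (1) then follows from antisymmetry in the target, while part (2) amounts to transferring these axioms along $g$, with isolation of $H$ entering precisely in axiom (iii).

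For (1), let $f: G \to G''$ be an increasing homomorphism and let $H := \ker f$. Suppose $0 \le y \le x$ with $x \in H$. Applying $f$ and using that $f$ is increasing and a homomorphism, I get $0 = f(0) \le f(y) \le f(x) = 0$. By antisymmetry of the order on $G''$ this forces $f(y) = 0$, so $y \in H$. Hence $H$ is isolated.

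For (2), I would verify the three positive cone axioms for $g(P) \subset G/H$. Axiom (i) is trivial since $0 \in P$ and $g(0) = 0$. Axiom (ii) is immediate from $g(P) + g(P) = g(P + P) \subseteq g(P)$. The crucial axiom is (iii): suppose $\overline{x} \in g(P) \cap (-g(P))$, and write $\overline{x} = g(a) = -g(b)$ with $a, b \in P$. Then $g(a+b) = 0$, so $a+b \in H$, and since $b \ge 0$ we have $0 \le a \le a+b$. Now the isolation hypothesis on $H$ forces $a \in H$, hence $\overline{x} = g(a) = 0$. This defines an ordered group structure on $G/H$ whose set of positive elements is $g(P)$, and by construction $g$ is increasing. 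If $G$ is totally ordered, then $G = P \cup (-P)$, and since $g$ is surjective we obtain $G/H = g(P) \cup (-g(P))$, so the induced order on $G/H$ is total.

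The main (and really only) obstacle is axiom (iii), the antisymmetry of the induced order on $G/H$; it is precisely here that the definition of an isolated subgroup is tailor-made to make the argument work. Everything else is formal transport of structure along the surjective homomorphism $g$.
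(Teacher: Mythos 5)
Your proof is correct: both parts are verified completely, and you use the isolation hypothesis exactly where it is needed, namely to get antisymmetry (properness of the cone $g(P)$) for the quotient order. For comparison, the paper does not prove this proposition at all --- it is quoted as a standard fact (the neighboring results are cited from Bourbaki, Commutative Algebra VI), so your self-contained argument via the positive-cone axioms, which is essentially Bourbaki's own, fills in what the paper leaves as a reference. Note only that you implicitly use the convention that ``positive'' means $\ge 0$ (so $0\in P$) and that $G$ is abelian, both of which are consistent with the paper's conventions in this section.
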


If $G$ is totally ordered, then the set of isolated subgroups of $G$
are totally ordered by the order of inclusion. For otherwise, there is a
positive element $x$ in one isolated subgroup $H$ but not in $H'$, and
a positive element $x'\in H'\setminus H$. Suppose for example $x\le
x'$, then $x\in H'$, a contradiction.  

\begin{defn}\label{ht}
  \

  \begin{enumerate}
  \item 
    Let $G$ be a totally ordered group. If the number of isolated
      subgroups of $G$ distinct from $G$ is finite and is equal to
      $n$, $G$ is said to be \emph{of height $n$}. If this number is
      infinite, $G$ is said to be \emph{of infinite height}. Denote by
      $h(G)$ the height of $G$. 

  \item The \emph{height} of a valuation $v$ of $K$ is defined as the
      height of its value group.    
  \end{enumerate}

\end{defn}

The height of the groups $\Z$ and $\R$ are of height $1$. If $G$ is a
totally ordered group and $H$ is an isolated subgroup, then
$h(G)=h(H)+h(G/H)$. In particular, if $G$ is the lexicographic product
of two totally ordered groups $H$ and $H'$, then
$h(G)=h(H)+h(H')$. Thus, the lexicographic product $\Z\times \Z$ is of
height $2$. 

Fix a valuation ring of $A$ of $K$ with the canonical valuation $v_A:
K^\times\to \Gamma_A:=K^\times/A^\times$. For subring $B$ of $K$
containing $A$, $B$ is a valuation ring and $A^\times \subset
B^\times$. Let $\lambda:\Gamma_A\to \Gamma_B$ be the natural
projection. As $A\subset B$, $\lambda$ maps the positive elements of
$\Gamma_A$ to positive elements of $\Gamma_B$. Thus, $\lambda$ is a
morphism of ordered groups and the kernel $H_B$ of $\lambda$ is an
isolated subgroup of $\Gamma_A$. The mapping $B\mapsto H_B$ is an
increasing bijection of the set of subrings of of $K$ containing $A$
onto the set of isolated subgroups of $\Gamma_A$ cf. \cite[VI, \S 
  4.3, Proposition~4]{bourbaki:ca}. Combining with
  Proposition~\ref{prop:v-prime}, these two sets are in deceasing
  bijection with the set of primes ideals of $A$. In particular,
  $h(\Gamma_A)={\rm ht} (\grm(A))=\dim A$, cf. \cite[VI, \S 
  4.4, Proposition~5]{bourbaki:ca}.

A totally ordered group $G$ is of height $\le 1$ if and only if it is
isomorphic to a subgroup of $\R$, cf.~\cite[VI, \S 
4.5, Proposition~8]{bourbaki:ca}.

\subsection{Topological fields and completions}
We first discuss the notion of completeness of a Hausdorff
commutative topological group and its completion. 
Our references are \cite{bourbaki:top} and \cite[Chap.~10]{li:alg}. 

\begin{defn}
  Let $X$ be a nonempty set. 

  \begin{enumerate}
  \item 
   A \emph{filter} of $X$ is a nonempty subset
  $\grF\subset P(X)$ of the power set of $X$ satisfying
  \begin{enumerate}
  \item[F.1] If $A,B\in \grF$, then $A\cap B \in \grF$;
  \item[F.2] If $A\in \grF$ and $A\subset A'\subset X$, then $A'\in \grF$;
  \item[F.3] $\emptyset \not\in \grF$.  
  \end{enumerate}

\item  A \emph{filter base} of $X$ is a nonempty subset
  $\grB\subset P(X)$ satisfying
  \begin{enumerate}
  \item[FB.1] If $A,B\in \grF$, then there exists $C\in \grB$ such that
    $C\subset A\cap B$;
  \item[FB.2] $\emptyset \not\in \grB$.  
  \end{enumerate}
  \end{enumerate}
\end{defn}

Condition F.3 implies that any finite intersection of members in $\grF$ is
nonempty. For a filter base $\grB$, the set $\grF:=\{F\subset X:
\exists B \in \grB, B\subset F \}$ is a filter of $X$, called the
filter generated by $\grB$, and $\grB$ is called a base of $\grF$. 
A filter $\grF'$ is called a \emph{refinement} of $\grF$ if
$\grF\subset \grF'$. 

\begin{eg}
  Let $\{x_k\}_{k=1}^\infty$ be a sequence in $X$. Let $\grF$ be the
  set consisting of all subsets $E$ such that there exists $N\ge1$
  such that $x_k\in E$ for all $k\ge N$. Then $\grF$ is a filter. Let
  $\grB$ be the set consisting of all subsets $\{x_k, x_{k+1},\dots,
  \}$ for some $k$. Then $\grB$ is a filter base which generates $\grF$.
\end{eg}

\begin{eg}
Let $X$ be a topological space. For every point $x\in X$, let $\grN_x$
denote the collection of all neighborhoods $E$ of $x$ (there exists an
open subset $U\ni x$ contained in $E$). Then $\grN_x$ is a filter. Any
fundamental system of neighborhoods of $x$ is a filter base of $\grN_x$.   
\end{eg}

\begin{defn}
  Let $X$ be a topological space. 
 We say a filter base $\grF$ \emph{converges to a point $x\in X$},
    denoted by $\grF \to x$, if
    every $E\in \grN_x$ contains a member $F\in \grF$. In this case, by $FB.1$, $x$ is in the closure of every
    member $F\in \grF$. If $\grF$
    is a filter, this is equivalent to say that $\grF$ is a refinement of
    $\grN_x$. 
\end{defn}

A topological space $X$ is Hausdorff if and only if every filter
converges to at most one point. 
Let $f:X\to Y$ be a map of topological spaces and $\grF$ a filter of
$X$. Set 
\[ f\grF:=\{F\subset Y: \exists E\in \grF, f(E)\subset F \}, \]  
which is a filter as $f(A\cap B)\subset f(A)\cap f(B)$. Then the map
$f$ is continuous if and only if for every $x\in X$ one has $(\grF\to
x) \implies (f\grF\to f(x))$. 

\begin{defn}
  Let $(A,+)$ be a Hausdorff commutative topological group. 

\begin{enumerate}
  \item 
 A filter $\grF$ of $X$ is called a \emph{Cauchy filter} if for any
    neighborhood $U\in \grN_0$ there exists $E\in \grF$ such that 
\[ E-E:=\{x-y: x,y\in E\}\subset U. \]

\item $A$ is said to be \emph{complete} if every Cauchy filter converges.

\item  A \emph{completion} of $A$ is pair $(A^*, \iota)$, where $
    A^*$ is a complete topological abelian group and $\iota:A\to A^*$
    is a morphism of topological groups, satisfying the following
    conditions.
    \begin{itemize}
    \item [(a)] $\iota: A\to \iota(A)$ is a homeomorphism. 
    \item [(b)] $\iota(A) \subset  A^*$ is dense.
    \end{itemize}  

  \end{enumerate}
\end{defn}
The condition (a) says that $\iota$ is injective and topology of $A$
is the same as the topology induced by $A^*$. It is proved
\cite[III, \S  3.5, Theorem~2 and \S 3.4 Proposition 8]{bourbaki:top}
   that a completion $( A^*,\iota)$ exists and 
    satisfies the functorial property: for any pairing $(B,f)$ where
    $B$ is a complete topological abelian group and $f:A\to B$ is a
    morphism of topological groups, then there exists a unique
    morphism $g:A^* \to B$ such that $g\circ \iota =f$. In
    particular, a completion $(A^*,\iota)$ is unique up to a unique
    isomorphism; such a pair $(A^*, \iota)$ is called \emph{the
      completion of 
      $A$}. If $A$ is a Hausdorff topological ring, then the
    completion $A^*$ of $(A,+)$ is a complete topological ring
    (complete for the underlying topological group $(A^*,+)$),
    cf.~\cite[III, \S 6]{bourbaki:top}. \\

Let $v$ be a valuation of a field $K$ with value group $G$. For all
$\alpha\in G$, let
\[ V_\alpha:=\{x\in K: v(x)>\alpha \} \quad \text{and} \quad V_{\ge
  \alpha}:=\{x\in K: v(x)\ge \alpha\} \] 
which are clearly 
additive subgroups of $K$. There exists a unique linear topology
$\grT_v$ on $K$ for which the sets $V_\alpha$ form a fundamental
system of neighborhoods of $0$. 
If $v$ is trivial, then $\grT_v$ is the discrete topology.
Equipped with this topology $K$ is a Hausdorff topological field.

Let $K^*$ be the completion of $K$, which is a complete topological
ring. Note that if $v$ is of height $1$, or more generally, 
there exists a
\emph{countable} fundamental system of neighborhoods of $0$. 
Then the notion 
of completeness and the construction of completion can be made by
the usual Cauchy sequences, which is similar to the classical
construction of $\R$ from $\Q$.  


\begin{prop}\label{prop:completion}
  Let $v$ be a valuation of a field $K$ with value group $G$ and equip $G$ with the discrete topology. 

\begin{enumerate}
  \item  The complete ring $K^*$ of $K$ is a topological field.

\item  The continuous map $v:K \to G_\infty$ can be extended uniquely to a continuous map $v^*: K^* \to G_\infty$ which is a valuation of $K^*$.

\item The topology on $K^*$ is the topology defined by the valuation $v^*$.

\item For all $\alpha\in G$, the closures $\ol V_\alpha$ and
  $\ol V_{\ge \alpha}$ of   $V_\alpha$ and $V_{\ge \alpha}$ are the subsets of $K^*$ defined by $v^*(x)> \alpha$ and $v^*(x)\ge \alpha$, respectively.

\item  The valuation ring of $v^*$ is the completion $A^*$ of $A$; its maximal ideal is the completion $\grm^*$ of the maximal ideal $\grm$ of $A$.

\item  $A^*=A+\grm^*$; the residue field of $v^*$ is canonically identified with that of $v$. 

\end{enumerate}
  
\end{prop}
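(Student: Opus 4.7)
My plan is to first extend $v$ to a valuation $v^*$ on $K^*$, establishing (2), and then read off the remaining assertions from the structure of $v^*$. The central lemma I will establish is the following: \emph{if $\grF$ is a Cauchy filter on $K$ that does not converge to $0$, then $v$ is eventually constant on $\grF$, with value in $G$.} To prove it, I would combine non-convergence to $0$, which produces some $\alpha_0 \in G$ such that every $E \in \grF$ meets $K \setminus V_{\alpha_0}$, with a Cauchy set $E_0 \in \grF$ satisfying $E_0 - E_0 \subset V_{\alpha_0}$: picking $x_0 \in E_0$ with $v(x_0) \le \alpha_0$, the ultrametric inequality forces $v(y) = v(x_0)$ for all $y \in E_0$, and sharpening with $E \subset E_0$ such that $E - E \subset V_\alpha$ for any $\alpha > v(x_0)$ shows that $v \equiv v(x_0)$ on $E$. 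I would then define $v^*: K^* \to G_\infty$ by $v^*(x^*) = v(x_0)$ whenever $x^* \neq 0$ is the limit of such a $\grF$ (eventually equal to $v(x_0)$), and $v^*(0) = \infty$. Uniqueness of the extension follows from continuity, and the valuation axioms for $v^*$ pass from $K$ to $K^*$ by inspection on product filters and filters of sums.

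Parts (3)--(5) then follow from this extension. Since the subgroups $V_{\ge \alpha}$ form a fundamental system of open-and-closed neighborhoods of $0$ in $K$, their closures $\ol V_{\ge \alpha}$ in $K^*$ form such a system there; the eventual-value description identifies these closures with $\{x^* \in K^* : v^*(x^*) \ge \alpha\}$, and similarly $\ol V_\alpha = \{x^* : v^*(x^*) > \alpha\}$. This gives (3) and (4) at once, and setting $\alpha = 0$ in (4) yields $A^* = \ol A = \{v^* \ge 0\}$ together with $\grm^* = \ol \grm = \{v^* > 0\}$, which is (5).

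For (1), I would use that for $x^* \in K^* \setminus \{0\}$, any Cauchy filter $\grF \to x^*$ on which $v$ is identically $\alpha := v^*(x^*) \in G$ has the property that $v(y^{-1} - z^{-1}) = v(y - z) - 2\alpha$ for $y, z$ in such a set, so the image filter $\grF^{-1}$ under $y \mapsto y^{-1}$ is again Cauchy in $K$; its limit in $K^*$ is forced by continuity of multiplication to be $(x^*)^{-1}$, so inversion extends continuously to $K^* \setminus \{0\}$ and $K^*$ becomes a topological field. For (6), density of $A$ in $A^*$ together with $\grm$ being open in $A$ yields, for every $x^* \in A^*$, some $a \in A$ with $x^* - a \in \grm^*$, proving $A^* = A + \grm^*$; the resulting map $A/\grm \to A^*/\grm^*$ is therefore surjective, and it is injective because any $a \in A \cap \grm^*$ satisfies $v(a) = v^*(a) > 0$, forcing $a \in \grm$.

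The main technical obstacle I anticipate is the rigorous verification of the eventual-value lemma and the subsequent passage of the valuation axioms from $K$ to $K^*$ via filters; once $v^*$ is in hand, parts (3)--(6) are essentially formal consequences of density, continuity, and the explicit description $\ol V_{\ge \alpha} = \{v^* \ge \alpha\}$.
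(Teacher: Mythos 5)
Your proposal is correct and follows essentially the same route as the source the paper relies on: the paper offers no argument of its own for this proposition, simply citing Bourbaki (VI, \S 5.3, Proposition 5), and your eventual-constancy lemma for Cauchy filters not converging to $0$, the extension of $v$ to $v^*$ by continuity, the identification $\ol V_{\ge \alpha}=\{x^*: v^*(x^*)\ge \alpha\}$ (hence parts (3)--(5)), and the inversion-of-Cauchy-filters argument for (1) are exactly the standard steps of that proof. The only points to tighten are routine: well-definedness of $v^*(x^*)$ independently of the chosen Cauchy filter (e.g.\ compare with the trace on $K$ of the neighborhood filter of $x^*$), and the observation that multiplicativity of $v^*$ together with your inversion argument should be in place before one may speak of $v^*$ as a valuation \emph{of the field} $K^*$, so part (1) logically accompanies part (2).
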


  \begin{proof}
    See \cite[VI, \S 
  5.3, Proposition~5]{bourbaki:ca}.
\end{proof}

\subsection{Extensions of valuations and the fundamental inequality}
\label{sec:V.3}

Let $v$ be a valuation of a field $K$, $A$ the valuation ring of $v$, 
$\grm$ its
maximal ideal, and $\Gamma_v$ the value group. 
Let $L/K$ be a finite field extension and $w$ be a
valuation of $L$ which extends $v$. 
Denote by $\Gamma_w$ the value group, $A'$ the valuation ring 
and $\grm'$ the
maximal ideal of $w$, respectively. 
Write $\kappa(v)$ and $\kappa(w)$ for the residue fields of $v$ and
$w$, respectively. The completion of $K$ at $v$ (resp.~ of $L$ at $w$)
is denoted by $K^*$ (resp.~$L_w^*$). The valuation of $K^*$ extending $v$ is denoted by $v^*$. 

\begin{defn} \ 
  \begin{enumerate}
  \item The \emph{ramification index of $w$ over $v$} is defined as
  $e(w/v):=[\Gamma_w:\Gamma_v]$.

\item  The \emph{residue class degree} of $w$ over
  $v$ is defined as $f(w/v):=[\kappa(w):\kappa(v)]$. 

  \end{enumerate}
\end{defn}

\begin{lemma}\label{lm:E.1}
  Let $K$, $v$, $L$ and $w$ be as above. 

  \begin{enumerate}
  \item 
  The inequality $e(w/v)f(w/v)\le [L:K]$ holds. 
    In particular, $e(w/v)$ and
    $f(w/v)$ are finite.

  \item  The height of $w$ is equal to that of $v$.

\item  The $w$ is trivial (resp. discrete) if and only if so is
$v$.
  \end{enumerate}
\end{lemma}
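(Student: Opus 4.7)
The plan is to establish the three parts in order, with (2) depending on (1) and (3) on (2).

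For part (1), I would follow the classical proof of the fundamental inequality. It suffices to exhibit $ef$ elements of $L$ that are $K$-linearly independent, where $e, f$ are any positive integers with $e \le [\Gamma_w:\Gamma_v]$ and $f \le [\kappa(w):\kappa(v)]$. Choose $x_1,\dots,x_f \in A'$ whose residues in $\kappa(w)$ are $\kappa(v)$-linearly independent, and $y_1,\dots,y_e \in L^\times$ whose $w$-values represent distinct cosets of $\Gamma_v$ in $\Gamma_w$. I would then argue that the products $\{x_i y_j\}$ are $K$-linearly independent. Given a relation $\sum_{i,j} a_{ij}\, x_i y_j = 0$ with $a_{ij}\in K$ not all zero, set $s_j := \sum_i a_{ij} x_i$. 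For each $j$ with some $a_{ij}\ne 0$, dividing $s_j$ by an $a_{i_0 j}$ realizing $\min_i v(a_{ij})$ puts all coefficients in $A$ and makes the $i_0$-th one a unit; the $\kappa(v)$-independence of the $\bar x_i$ then forces $\overline{s_j/a_{i_0 j}}\ne 0$ in $\kappa(w)$, so $w(s_j) = v(a_{i_0 j}) \in \Gamma_v$. Consequently the nonzero $w(s_j y_j) = w(s_j) + w(y_j)$ land in pairwise distinct cosets of $\Gamma_v$ in $\Gamma_w$, hence are pairwise distinct, and the ultrametric inequality applied to $\sum_j s_j y_j = 0$ then produces a contradiction. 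Finiteness of $e(w/v)$ and $f(w/v)$ is an immediate consequence.

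For part (2), apply (1) to get $e := [\Gamma_w:\Gamma_v] < \infty$, so in particular $e\Gamma_w \subset \Gamma_v$. I would then construct an inclusion-preserving bijection
\[
\{\text{isolated subgroups of }\Gamma_v\} \;\longleftrightarrow\; \{\text{isolated subgroups of }\Gamma_w\}
\]
given by $H \mapsto H^* := \{x\in \Gamma_w : ex \in H\}$ and $H_0 \mapsto H_0 \cap \Gamma_v$. The essential observation is that since both groups are torsion-free and totally ordered, one has $0 \le y \le ey$ for $y\ge 0$ (and symmetrically for $y\le 0$); combined with $e\Gamma_w\subset\Gamma_v$ this lets isolatedness be transferred across multiplication by $e$, so $H^*$ is isolated in $\Gamma_w$, and the two assignments invert each other. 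Since the bijection preserves strict inclusion, $h(\Gamma_v) = h(\Gamma_w)$.

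Part (3) follows formally from (2). The valuation $v$ is trivial iff $\Gamma_v = 0$ iff $h(\Gamma_v) = 0$, and similarly for $w$, so (2) gives the equivalence. For the discrete case, if $v$ is discrete then $\Gamma_v \cong \Z$ has height $1$, so by (2) $\Gamma_w$ has height $1$ and embeds in $\R$ by the classification recalled at the end of \S\ref{sec:V.1}; containing $\Gamma_v \cong \Z$ with finite index $e$ forces $\Gamma_w \subset \tfrac{1}{e}\Z$ and therefore $\Gamma_w \cong \Z$. The converse direction is analogous. The main obstacle is the combinatorial argument in (1): one must simultaneously exploit the $\kappa(v)$-independence of the $\bar x_i$ (to guarantee $w(s_j)\in\Gamma_v$, not just $\Gamma_w$) and the coset separation of the $w(y_j)$ (to prevent any two $w(s_j y_j)$ from coinciding), which together make the ultrametric contradiction go through. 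Parts (2) and (3) are then essentially a torsion-free bookkeeping exercise on ordered groups.
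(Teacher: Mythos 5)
Your proof is correct: part (1) is the classical argument showing the products $x_iy_j$ are $K$-linearly independent (the residue computation gives $w(s_j)\in\Gamma_v$, the coset condition separates the values, and the ultrametric property yields the contradiction), and your isolated-subgroup correspondence $H\mapsto\{x\in\Gamma_w: ex\in H\}$, $H_0\mapsto H_0\cap\Gamma_v$ does give mutually inverse, inclusion-preserving bijections, so (2) and (3) follow. The paper offers no argument of its own here, merely citing Bourbaki, Commutative Algebra, VI, \S 8.1; your write-up is essentially a correct, self-contained rendition of that standard proof.
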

\begin{proof}
  See \cite[VI, \S 8.1]{bourbaki:ca}.
\end{proof}

\begin{defn}[{\cite[VI, \S 7.2]{bourbaki:ca}}]
  Two valuations $v$ and $v'$ of $K$ are said to be \emph{independent}
  if the subring generated by their valuation rings is equal to $K$;
  and \emph{dependent} otherwise.
\end{defn}

The trivial valuation is independent of any valuation of $K$. 
Two valuations 
are dependent if and only if 
there is a relation $A\subset A'\subsetneq K$ among
their valuation rings $A$ and $A'$. 
If two non-trivial valuations $v$ and $v'$ are of same finite height, 
then they are dependent if and only if
they are equivalent. Indeed, if $v$ and $v'$ are equivalent, then
$A=A'\subsetneq K$ and they are dependent. Conversely if they are
dependent then up to switching the order one has $A\subset A' \subsetneq K$.
Therefore, $A=A'$ for otherwise ${\rm ht} (v')<{\rm ht} (v)$, contradiction.

Let $\Sigma_v$ be a complete set of representatives of equivalence
classes of extensions of a valuation $v$ of $K$ on $L$. If $v$ is
trivial then $\Sigma_v$ consists of the trivial valuation of $L$. 
Also write $w|v$ if $w\in \Sigma_v$.
If $w_1, w_2\in \Sigma_v$ with $w_1\neq w_2$ (this implies that $v$
must be non-trivial), then there is no
inclusion relation for their valuation rings cf.~\cite[(A),
p.2]{cohen-zariski}. Thus, every two distinct valuations in $\Sigma_v$
are independent.  

\begin{prop}\label{prop:E.2}
   Let $v$ be a valuation of $K$ and $L/K$ a finite field extension.
   \begin{enumerate}
   \item 
 For every $w\in \Sigma_v$, one has $e(w^*/v^*)=e(w/v),
f(w^*/v^*)=f(w/v)$, $[L_w^*:K^*]\le [L:K]$ and $e(w/v)f(w/v)\le
[L_w^*:K^*]$.

\item 
 Every set of pairwise independent valuations of $L$ extending a
 non-trivial valuation $v$ is finite. Let $\{w_1,\dots, w_r\}$ be a
 maximal set of pairwise independent valuations of $L$ extending a
 non-trivial valuation $v$. Then the canonical mapping $\phi:
 K^*\otimes_K L \to \prod_{i=1}^r L_{w_i}^*$ (extending by continuity
 the diagonal map $L\to \prod_{i=1}^r L_{w_i}^*$) is surjective, its
 kernel is the Jacobson radical of $K^*\otimes_K L$ and  
\begin{equation}
  \label{eq:E.2}
  \sum_{i=1}^r [L_{w_i}^*: K^*]\le [L:K].
\end{equation}
   \end{enumerate}
\end{prop}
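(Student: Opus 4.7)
The plan is to handle part (1) by direct bookkeeping with Proposition~\ref{prop:completion} and Lemma~\ref{lm:E.1}, and to derive part (2) by combining the weak approximation theorem for pairwise independent valuations with the Artinian structure of $K^*\otimes_K L$.

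For part (1), the equalities $e(w^*/v^*)=e(w/v)$ and $f(w^*/v^*)=f(w/v)$ follow because, by Proposition~\ref{prop:completion}\,(2),(5),(6) applied to both $v$ and $w$, completion preserves the value group and the residue field. For $[L_w^*:K^*]\le [L:K]$, I would fix a $K$-basis $\alpha_1,\dots,\alpha_n$ of $L$; the $K^*$-subspace of $L_w^*$ they span is a finite-dimensional subspace of the Hausdorff topological $K^*$-vector space $L_w^*$, hence closed, and it contains the dense subset $L$, so it equals $L_w^*$. The remaining bound $e(w/v)f(w/v)\le [L_w^*:K^*]$ is then Lemma~\ref{lm:E.1}\,(1) applied to the pair $(v^*,w^*)$ on the finite extension $L_w^*/K^*$.

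For part (2), the core ingredient is the weak approximation theorem: for any pairwise independent extensions $w_1,\dots,w_r$ of $v$ to $L$, the diagonal embedding $L\hookrightarrow \prod_i L_{w_i}^*$ has dense image in the product topology. Granting this, each component map $K^*\otimes_K L\to L_{w_i}^*$ is surjective by the density argument used in part (1), so its kernel $\grm_i$ is a maximal ideal with residue field $L_{w_i}^*$; approximation also forces the $\grm_i$ to be pairwise distinct. By the Chinese remainder theorem applied to the pairwise coprime maximal ideals $\grm_i$, the map $\phi$ is surjective with kernel $\bigcap_i \grm_i$, and the dimension count $\sum_{i=1}^r [L_{w_i}^*:K^*]\le \dim_{K^*}(K^*\otimes_K L)=[L:K]$ yields both the inequality~\eqref{eq:E.2} and the bound $r\le [L:K]$, hence the finiteness of any set of pairwise independent extensions.

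To identify $\bigcap_i\grm_i$ with the Jacobson radical of $K^*\otimes_K L$, I would invoke the maximality of the family $\{w_1,\dots,w_r\}$: every maximal ideal $\grm$ of $K^*\otimes_K L$ has finite residue field $F=(K^*\otimes_K L)/\grm$ over $K^*$, on which $v^*$ extends to a valuation whose restriction along $L\hookrightarrow K^*\otimes_K L \to F$ is an extension of $v$ to $L$; maximality forces this new valuation to be dependent on, and hence (since all non-trivial extensions of $v$ share its height by Lemma~\ref{lm:E.1}\,(2)) equivalent to, some $w_i$, so $\grm=\grm_i$. I expect the main obstacle to be precisely this last identification, which requires both the maximality hypothesis and the equivalence-of-dependent-extensions-of-the-same-height principle; the weak approximation theorem, though essential, is a standard input, and the dimension-count giving surjectivity and \eqref{eq:E.2} is routine once it is in hand.
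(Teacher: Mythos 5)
The paper offers no argument of its own here (it cites \cite[VI, \S 8.2, Proposition 2]{bourbaki:ca}), and your sketch is essentially a reconstruction of that proof: part (1) and the surjectivity/dimension count in part (2) are the right route. Two caveats on those portions. First, the closedness of finite-dimensional subspaces must be invoked over a field complete for an \emph{arbitrary} (possibly higher-height) valuation topology, not just a rank-one absolute value; this is true, but it is the nontrivial input of \cite[VI, \S 5.2]{bourbaki:ca} and deserves to be flagged. Second, your order of argument in (2) --- componentwise surjectivity, then ``approximation forces the $\grm_i$ to be pairwise distinct'', then CRT --- leaves the distinctness of the $\grm_i$ unproved, and it is not easy to get directly ($\grm_i=\grm_j$ only yields a $K^*$-isomorphism $L_{w_i}^*\simeq L_{w_j}^*$ fixing $L$, which is not an immediate contradiction). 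The clean argument runs the other way: the image of $\phi$ is a finite-dimensional $K^*$-subspace of $\prod_i L_{w_i}^*$, hence closed, and it contains the diagonal image of $L$, which is dense by weak approximation for pairwise independent valuations; surjectivity of $\phi$, distinctness of the $\grm_i$, the inequality \eqref{eq:E.2} and $r\le[L:K]$ all follow at once.

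The genuine gap is in your identification of $\ker\phi$ with the Jacobson radical. You invoke the principle that two dependent extensions of $v$ of the same (finite) height are equivalent; this is false. Take $K=\Q((y))$ with $v$ the height-two valuation composing $\ord_y$ with the $5$-adic valuation of the residue field $\Q$, and $L=K(i)=\Q(i)((y))$: composing $\ord_y$ with the $(2+i)$-adic and $(2-i)$-adic valuations of $\Q(i)$ gives two inequivalent extensions of $v$ to $L$, both of height two, whose valuation rings both lie in the proper valuation ring $\Q(i)[[y]]\subsetneq L$, so they are dependent. (The same example shows distinct extensions of $v$ need not be pairwise independent, which is exactly why the proposition works with a \emph{maximal pairwise independent} set.) What your argument actually needs, and what is true, is that dependent valuations induce the same topology on $L$ (the topology of a valuation coincides with that of any nontrivial coarsening, cf.\ \cite[VI, \S 7.2]{bourbaki:ca}); hence they have the same completion and give the same kernel in $K^*\otimes_K L$, so $\grm_{w'}=\grm_{w_i}$ without any equivalence claim. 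Moreover, even granting equivalence, the step ``so $\grm=\grm_i$'' is incomplete: you must first identify $F=(K^*\otimes_K L)/\grm$, equipped with an extension $u$ of $v^*$, with the completion of $L$ at $u|_L$; this again uses the finite-dimensional TVS theorem over the complete field $K^*$ (to see that $F$ is complete) together with the density of $L$ in $F$ (since $F=K^*\cdot L$ and the closure of $L$ is a $K^*$-subspace). With the dependence-implies-same-topology fix and this completion identification supplied, your outline closes; as written, the final paragraph rests on a false principle and an unestablished identification.
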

\begin{proof}
  See \cite[VI, \S 8.2, Proposition 2]{bourbaki:ca}.
\end{proof}

\begin{cor}[The fundamental inequality]\label{cor:E.3}
  Let $v$ be a valuation of $K$ and $L/K$ a finite extension. We have
  \begin{equation}
    \label{eq:E.3}
    \sum_{w|v} e(w/v)f(w/v)\le [L:K]. 
  \end{equation}
\end{cor}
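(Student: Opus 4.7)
The plan is to reduce the corollary directly to Proposition~\ref{prop:E.2}, which already contains essentially all of the content. The key observation, which has been singled out in the paragraph preceding Proposition~\ref{prop:E.2}, is that any two distinct valuations in $\Sigma_v$ are \emph{independent}: indeed, if $w_1\neq w_2$ were dependent extensions of $v$, their valuation rings would satisfy an inclusion $A_{w_1}\subsetneq A_{w_2}$ (up to reordering), but since $w_1$ and $w_2$ have the same height as $v$ by Lemma~\ref{lm:E.1}(2), such an inclusion would force strict height drop, a contradiction. Hence $\Sigma_v$ is automatically a set of pairwise independent valuations of $L$ extending $v$.

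First I would dispose of the trivial case: if $v$ is the trivial valuation of $K$, then $\Sigma_v$ consists of the single trivial valuation on $L$, for which $e(w/v)=f(w/v)=1$, and the inequality $1\le [L:K]$ is obvious. So we may assume $v$ is non-trivial.

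In the non-trivial case, Proposition~\ref{prop:E.2}(2) guarantees that any set of pairwise independent extensions of $v$ is finite; therefore $\Sigma_v$ itself is finite, and being a maximal such set (any extension of $v$ to $L$ is equivalent to some $w\in \Sigma_v$, hence dependent on it), we may apply the inequality
\[
\sum_{w\in \Sigma_v} [L_w^*:K^*] \le [L:K]
\]
from \eqref{eq:E.2}. Combining this with the per-valuation inequality $e(w/v)f(w/v)\le [L_w^*:K^*]$ supplied by Proposition~\ref{prop:E.2}(1) yields
\[
\sum_{w\mid v} e(w/v)f(w/v) \le \sum_{w\in \Sigma_v} [L_w^*:K^*] \le [L:K],
\]
which is the desired inequality \eqref{eq:E.3}.

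There is essentially no obstacle once Proposition~\ref{prop:E.2} is granted; the only mildly delicate point is verifying that $\Sigma_v$ qualifies as a (maximal) set of pairwise independent extensions so that part (2) applies, and this is handled by the height argument above together with the remark made in the text preceding the proposition. No new estimates, no passage through the completion beyond what is already in the proposition, and no reduction step beyond the trivial-versus-non-trivial split are needed.
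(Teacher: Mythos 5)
Your proof follows essentially the same route as the paper's: split off the trivial case, use the remark preceding Proposition~\ref{prop:E.2} (your height argument via Lemma~\ref{lm:E.1}(2) just re-derives it) to see that $\Sigma_v$ is a maximal set of pairwise independent extensions of $v$, and then combine parts (1) and (2) of that proposition. One small correction: when $v$ is trivial the valuation ring of the trivial valuation on $L$ is $L$ itself, so $\kappa(w)=L$ and $f(w/v)=[L:K]$, not $1$; the sum is then exactly $[L:K]$, so the inequality still holds and the rest of your argument is unaffected.
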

\begin{proof}
  If $v$ is trivial, then $w$ is trivial and one has
  $\Gamma_v=\Gamma_w=\{0\}$, 
  $\kappa(v)=K$ and $\kappa(w)=L$. Therefore, $\sum_{w|v} e(w/v)
  f(w/v)=[L:K]$. 
  Suppose that $v$ is non-trivial. By the remark above
  Proposition~\ref{prop:E.2}, $\Sigma_v$ is a maximal set
  $\{w_1,\dots, w_r\}$ 
  of pairwise independent valuations of $L$ extending $v$.
  Then \eqref{eq:E.3} follows from Proposition~\ref{prop:E.2}. \qed   
\end{proof}

\begin{defn}[{\cite[VI, \S 3.5, \S 8.4]{bourbaki:ca}}]\label{def:ini_ind}
  \ 

  \begin{enumerate}
  \item 
 Let $G$ be an ordered set. A subset $M$ of $G$ is called \emph{major}
 if the relations $x\in M$ and $y\ge x$ imply $y\in M$.

\item  Let $G$ be a totally ordered commutative group and $H$ a
  subgroup of finite index. Denote by $G_{>0}\subset G$ the subset of
  strictly positive elements.  The \emph{initial index of $H$ in $G$},
  denoted by 
  $\varepsilon(G,H)$, is the number of major subsets $M$ of $G$ such
  that $H_{>0}\subset M \subset G_{>0}$. 

  \end{enumerate}
\end{defn}

If $G=\Z$ and $H=m\Z$ with $m>0$, letting $M(x):=\{y\in G: y\ge x\}$,
then $M(1), M(2),\dots, M(m)$ are all major subsets of $G$ satisfying
the property in Definition~\ref{def:ini_ind} and
$\varepsilon(G,H)=m$. 

\begin{prop}\label{prop:E.4}
  Let $G$ be a totally ordered commutative group and $H$ a subgroup of
  finite index. 

  \begin{enumerate}
  \item 
If the set $G_{>0}$ has no least element, then $\varepsilon(G,H)=1$;

\item   If the set $G_{>0}$ has the least element $x_0$, then
  $\varepsilon(G,H)=[G_0: (G_0\cap H)]$, where $G_0$ is the cyclic
  subgroup generated by $x_0$. 
  \end{enumerate}
  In particular, $\varepsilon(G,H)$ divides $[G:H]$.

\end{prop}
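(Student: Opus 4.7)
The plan is to analyze major subsets $M$ satisfying $H_{>0}\subseteq M\subseteq G_{>0}$ directly in each case.

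For part (1), assume $G_{>0}$ has no least element; I will show that $M=G_{>0}$ is the only possibility. If $M\subsetneq G_{>0}$ and $z\in G_{>0}\setminus M$, the defining property of major subsets forces $M\cap(-\infty,z]=\emptyset$, so it would suffice to exhibit $h\in H_{>0}$ with $h\le z$. Since $G_{>0}$ has no least element, $(0,z]\cap G$ is infinite, and because $[G:H]<\infty$ some two distinct elements $y_1<y_2$ of this interval share a coset of $H$; then $h:=y_2-y_1$ is a positive element of $H$ with $h\le z$, contradicting $H_{>0}\subseteq M$. Hence $\varepsilon(G,H)=1$.

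For part (2), set $m:=[G_0:G_0\cap H]$, so that $G_0\cap H=(mx_0)\Z$. The crucial structural observation is an \emph{archimedean-gap} property: if $y\in G_{>0}$ and $y\notin G_0$, then $y>kx_0$ for every $k\ge 0$. This is proved by induction on $k$: $x_0$ being least positive forces $y>x_0$, and $y-x_0$ is again positive and outside $G_0$, so the inductive step applies. It follows that $(0,mx_0]\cap G=\{x_0,2x_0,\dots,mx_0\}$, and in particular $mx_0$ is the least positive element of $H$. A major subset $M$ with $H_{>0}\subseteq M\subseteq G_{>0}$ must therefore contain $\{y\in G:y\ge mx_0\}$ and cannot contain $0$; combined with the archimedean-gap observation and the up-set property, this forces $M$ to have the form $M_k:=\{y\in G:y\ge kx_0\}$ for a unique $k\in\{1,\dots,m\}$. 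Each such $M_k$ is manifestly major with the required inclusions, so $\varepsilon(G,H)=m$.

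For the final divisibility claim, case (1) is trivial, and in case (2) the second isomorphism theorem gives $[G_0:G_0\cap H]=[G_0+H:H]$, which divides $[G:H]$.

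The main obstacle I anticipate is the archimedean-gap step in part (2): it is where the hypothesis that $x_0$ is the least positive element is essentially used, and it is what reduces the \emph{a priori} infinite set $G_{>0}\cap(0,mx_0]$ to a finite set, making the count of major subsets finite.
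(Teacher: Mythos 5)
Your proof is correct. The paper itself does not argue this proposition but simply cites Bourbaki (VI, \S 8.4, Proposition 3), and your argument is essentially the standard one given there: the pigeonhole on cosets of $H$ inside an infinite descending chain in $(0,z]$ for part (1), the observation that any positive element outside $G_0$ exceeds every multiple of $x_0$ (so that $(0,mx_0]\cap G=\{x_0,\dots,mx_0\}$ and $mx_0$ is the least positive element of $H$, whence the major sets are exactly $M_1,\dots,M_m$) for part (2), and the second isomorphism theorem for the divisibility claim. All steps check out, including the inductive ``archimedean-gap'' argument you flagged as the crux.
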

\begin{proof}
  See \cite[VI, \S 8.4, Proposition 3]{bourbaki:ca}.
\end{proof}

\begin{defn}\label{def:ini_ram}
  Let $v$ be a valuation of $K$ and $w|v$ a valuation of a finite
  extension $L/K$ with value groups $\Gamma_v$ and $\Gamma_w$,
  respectively. The \emph{initial ramification index of $w$ with
  respect to $v$ (or $w$ over $v$)} is defined as $\varepsilon(w/v):=
  \varepsilon(\Gamma_w,\Gamma_v)$.   
\end{defn}

\begin{thm}\label{thm:E.5}
  Let $L/K$ be a  and
  Let $v$ a valuation of $K$
  with valuation ring $A$ and maximal ideal $\grm$. Let $L/K$ be a
  finite field extension with integral closure $B$ of $A$ in $L$. The
  following conditions are equivalent:

  {\rm (a)} $B$ is a finite $A$-module;

  {\rm (b)} $B$ is a free $A$-module;
  
  {\rm (c)} $[B/\grm B: \kappa(\grm)]=[L:K]$;

  {\rm (d)} $\sum_{w|v} \varepsilon(w/v) f(w/v)=[L:K]$.
 
\end{thm}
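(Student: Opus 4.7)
The plan is to sandwich $[B/\grm B:\kappa(\grm)]$ between $\sum_{w\mid v}\varepsilon(w/v)f(w/v)$ and $[L:K]$, and extract all equivalences from the pinch. The equivalence (a) $\Leftrightarrow$ (b) is the classical fact that any finitely generated torsion-free module over a valuation ring is free (Bourbaki VI, \S 3), and (b) $\Rightarrow$ (c) is immediate since $B\cong A^n$ yields $B/\grm B\cong \kappa(\grm)^n$ with $n=[L:K]$.

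The key local computation is $[A_w/\grm A_w:\kappa(\grm)] = \varepsilon(w/v)f(w/v)$ for each $w\mid v$. Since $\grm A_w = \{x\in A_w : w(x)\ge \delta \text{ for some } \delta\in(\Gamma_v)_{>0}\}$, filtering $A_w/\grm A_w$ by $w$-value produces a Jordan--H\"older-type filtration with one $\kappa(w)$-composition factor for each $\gamma\in(\Gamma_w)_{\ge 0}$ lying strictly below every element of $(\Gamma_v)_{>0}$. The assignment $\gamma\mapsto \{\eta\in\Gamma_w:\eta>\gamma\}$ bijects this set of $\gamma$'s with the major subsets $M$ of $\Gamma_w$ satisfying $(\Gamma_v)_{>0}\subset M\subset (\Gamma_w)_{>0}$, whose cardinality is $\varepsilon(w/v)$ by Definition~\ref{def:ini_ind} and Proposition~\ref{prop:E.4}.

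For the upper bound $[B/\grm B:\kappa(\grm)]\le[L:K]$, the clearing-denominators argument works: if $\sum c_ib_i=0$ in $L$ with $c_i\in K$ not all zero, divide by a $c_i$ of minimum valuation to produce a relation with coefficients in $A$ of which one is a unit, and reduce modulo $\grm$ to contradict $\kappa(\grm)$-linear independence of the images $\bar b_i$. For the lower bound, I would show the natural map $B/\grm B\to \prod_{w\mid v}A_w/\grm A_w$ is surjective via weak approximation: given $(\bar c_w)$, lift to $(c_w)\in\prod A_w$, fix any $\gamma_0\in(\Gamma_v)_{>0}$, and use the approximation theorem for pairwise independent valuations to find $b\in L$ with $w(b-c_w)\ge\gamma_0$ for every $w\mid v$; since $w(c_w)\ge 0$ and $\gamma_0>0$, this forces $b\in\bigcap_w A_w = B$, and $b-c_w\in\grm A_w$ by the description above. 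Combining these bounds with the local computation yields
\[
  \sum_{w\mid v}\varepsilon(w/v)f(w/v) \;\le\; [B/\grm B:\kappa(\grm)] \;\le\; [L:K],
\]
so (c) and (d) are each equivalent to equality throughout.

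To close the loop with (c) $\Rightarrow$ (b), lift a $\kappa(\grm)$-basis $b_1,\dots,b_n$ of $B/\grm B$; clearing denominators makes the $b_i$ a $K$-basis of $L$, so $M:=\sum_i Ab_i\subset B$ is free of rank $n=[L:K]$. For each $w\mid v$, some $b_i$ lies outside $\grP_w$: otherwise $\bar b_i\in\grP_w/\grm B$ for every $i$, forcing $B=\grP_w+\grm B=\grP_w$, which contradicts properness of the maximal ideal $\grP_w$. Hence some $b_{i(w)}\in A_w^{\times}$, and since $A_w$ is a valuation ring the finitely generated ideal $\sum A_w b_i$ coincides with the principal ideal generated by the element of smallest $w$-value, giving $M_{\grP_w}=A_w=B_{\grP_w}$. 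Since the $\grP_w$ exhaust the maximal ideals of $B$, the localizations of $B/M$ all vanish, so $B=M$ is free of rank $[L:K]$. The main obstacle is the local dimension computation: the non-discrete case requires careful analysis of major subsets, and the bijection between composition factors and major subsets depends on the dichotomy of Proposition~\ref{prop:E.4}.
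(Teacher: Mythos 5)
The paper offers no argument of its own here (it simply cites Bourbaki VI, \S 8.5, Th\'eor\`eme 2), so your proof has to stand on its own, and as written it has a logical hole plus two faulty steps. First, the hole: your local computation $[A_w/\grm A_w:\kappa(\grm)]=\varepsilon(w/v)f(w/v)$ and the pinch $\sum_{w\mid v}\varepsilon(w/v)f(w/v)\le[B/\grm B:\kappa(\grm)]\le[L:K]$ only yield (d)$\Rightarrow$(c): condition (c) pins down the middle term, not the left one, so ``(c) is equivalent to equality throughout'' is unjustified, and nothing in your argument ever proves (a), (b) or (c) implies (d); what you actually establish is (a)$\Leftrightarrow$(b)$\Leftrightarrow$(c) and (d)$\Rightarrow$(c). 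The missing ingredient is injectivity of $B/\grm B\to\prod_{w\mid v}A_w/\grm A_w$, which upgrades your lower bound to the unconditional identity $[B/\grm B:\kappa(\grm)]=\sum_{w\mid v}\varepsilon(w/v)f(w/v)$ and hence gives (c)$\Leftrightarrow$(d). It is easy from $B=\bigcap_{w\mid v}A_w$: if $b\in\grm A_w$ for every $w$, then $w(b)\ge\delta_w$ for some $\delta_w\in(\Gamma_v)_{>0}$; choosing $a\in\grm$ with $v(a)=\min_w\delta_w$ gives $w(b/a)\ge 0$ for all $w$, so $b/a\in B$ and $b\in\grm B$.

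Second, two steps fail as stated. (i) Your surjectivity argument appeals to the approximation theorem for \emph{pairwise independent} valuations, but when $v$ has height $\ge 2$ the distinct extensions $w\mid v$ need not be pairwise independent: take $v$ on $K=\Q(t)$ the composite of the $t$-adic valuation with the $5$-adic valuation of the residue field $\Q$, and $L=\Q(i)(t)$; the two extensions of $v$ both have valuation ring inside $\Q(i)[t]_{(t)}\subsetneq L$, hence are dependent. (The paper's remark before Proposition~\ref{prop:E.2} makes the same slip; it is harmless there only because the application is to discrete $v$, but Theorem~\ref{thm:E.5} concerns arbitrary $v$.) The surjectivity you want is still true, but should be proved differently, e.g.\ every prime of $B/\grm B$ is one of the finitely many maximal ideals $\grP_w/\grm B$, so $B/\grm B$ is zero-dimensional with finitely many primes and is therefore the product of its localizations $(B/\grm B)_{\grP_w}=A_w/\grm A_w$ (using $B_{\grP_w}=A_w$). (ii) Your closing argument for (c)$\Rightarrow$(b) does not parse: $M=\sum_i Ab_i$ is an $A$-submodule but not a $B$-submodule of $B$, so $B/M$ is not a $B$-module, the localization $(B\setminus\grP_w)^{-1}M$ is not $\sum_i A_wb_i$, and the principle ``a module whose localizations at all maximal ideals vanish is zero'' has no ring to act over ($A$ is local, so localizing over $A$ gives nothing). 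Replace this by the standard computation, which mirrors your own upper-bound trick: write $x\in B$ as $x=\sum_i c_ib_i$ with $c_i\in K$; if some $c_i\notin A$, divide by a coefficient $c_j$ of minimal valuation, observe $c_j^{-1}x\in\grm B$ because $c_j^{-1}\in\grm$, and reduce modulo $\grm B$ to contradict the $\kappa(\grm)$-linear independence of the $\bar b_i$; hence all $c_i\in A$ and $B=M$ is free. With these repairs (and the trivial remark that a free $B$ has rank $[L:K]$, so (b)$\Rightarrow$(a)), your outline does give the theorem, along lines close to Bourbaki's.
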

\begin{proof}
  See \cite[VI, \S 8.5, Theorem 2]{bourbaki:ca}. 
\end{proof}

\begin{remark}\label{rem:E.6} \ 

  \begin{enumerate}
  \item 

 The fundamental inequality (Corollary~\ref{cor:E.3}) was first
  proved by Cohen and Zariski for an arbitrary valuation $v$
  \cite{cohen-zariski}.

\item The condition (d) is equivalent to (d') $\sum_{w|v} e(w/v)
  f(w/v)=[L:K]$ and $\varepsilon(w/v)=e(w/v)$ for all $w|v$. 
  When $v$ is discrete, one has $\varepsilon(w/v)=e(w/v)$ and 
  the condition (d) is equivalent to $\sum_{w|v} e(w/v) f(w/v)=[L:K]$. 
  In this special case, Theorem~\ref{thm:E.5} was first proved by
  Cohen and Zariski \cite{cohen-zariski}.  
  \end{enumerate}
\end{remark}


\section{Proofs of Theorems~\ref{thm:1} and~\ref{thm:2}}
\label{sec:04}

\subsection{Proof of Theorem~\ref{thm:1}}
\label{sec:41}
Theorem~\ref{thm:1} follows from Lemma~\ref{lm:fe} and
Proposition~\ref{prop:fe2}. 

 \begin{lemma}\label{lm:fe}
  Let $A$ be a Dedekind domain with quotient field $K$, and $L$ a
  finite field extension of $K$ of degree $n$ with integral closure
  $B$ of $A$ in $L$. For each nonzero prime ideal $\grp$ of $A$, one
  has 
  \begin{equation}
    \label{eq:fe2}
    \sum_{i=1}^r e_i f_i = \dim_{k(\grp)} B/\grp B \le n,
    \end{equation}    
where $\grP_1,\dots, \grP_r$ are the prime ideals of $B$ over $\grp$,
$e_i$ and $f_i$ are the ramification index and the residue class degree of
$\grP_i$ over $\grp$. Moreover, if $B$ is a finite $A$-module, then
the equality holds. 
\end{lemma}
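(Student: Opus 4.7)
My plan is to reduce the computation to a dimension count inside the Dedekind domain $B$ and then bound that dimension by $n=[L:K]$. The key input ensuring that $B$ is well-behaved even when it is not finite over $A$ is the Krull--Akizuki theorem: since $A$ is one-dimensional Noetherian, any ring between $A$ and $L$ is Noetherian of Krull dimension at most one. Combined with lying-over (which produces a nonzero prime of $B$ above $\grp$, forcing $\dim B \ge 1$) and the fact that $B$ is integrally closed, this yields that $B$ is a Dedekind domain. Consequently $\grp B$ admits a unique factorization $\grp B = \prod_{i=1}^{r} \grP_i^{e_i}$, where $\grP_1,\ldots,\grP_r$ are the primes of $B$ lying over $\grp$ and the exponent $e_i$ matches the valuation-theoretic ramification index $e(w_{\grP_i}/v_\grp)$ from Section~\ref{sec:03}.

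The Chinese Remainder Theorem then gives
\[
B/\grp B \;\cong\; \prod_{i=1}^{r} B/\grP_i^{e_i},
\]
and each factor $B/\grP_i^{e_i}$ carries the $\grP_i$-adic filtration of length $e_i$ whose successive quotients are all isomorphic to the residue field $k(\grP_i)$, a $k(\grp)$-vector space of dimension $f_i$. Summing yields $\dim_{k(\grp)} B/\grp B = \sum_{i=1}^{r} e_i f_i$, which is the first equality in \eqref{eq:fe2}.

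For the bound $\sum_{i=1}^{r} e_i f_i \le n$, I plan a Nakayama-type argument after localizing at $\grp$. Suppose $x_1,\ldots,x_m \in B$ reduce to a $k(\grp)$-linearly independent family in $B/\grp B$; I claim they are $K$-linearly independent in $L$. From any hypothetical relation $\sum_j a_j x_j = 0$ with $a_j\in K$ not all zero, scaling by a suitable power of a uniformizer of $A_\grp$ arranges $a_j \in A_\grp$ with at least one $a_j \notin \grp A_\grp$; reducing modulo $\grp$ then contradicts the assumed independence. Hence $m \le n$. Finally, when $B$ is a finite $A$-module, the localization $B_\grp := (A\setminus \grp)^{-1} B$ is a finitely generated torsion-free module over the DVR $A_\grp$ with generic fiber $L$, hence free of rank $n$; consequently $B/\grp B \cong B_\grp/\grp B_\grp \cong k(\grp)^n$, and equality holds. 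The main obstacle is securing the prime-power factorization of $\grp B$ without assuming $B$ is $A$-finite, and this is precisely what Krull--Akizuki supplies.
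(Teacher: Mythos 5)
Your proof is correct and follows essentially the same route as the paper's: the Chinese Remainder decomposition of $B/\grp B$ together with the $\grP_i$-adic filtration gives the first equality, lifting a $k(\grp)$-linearly independent family to a $K$-linearly independent one gives the bound by $n$, and freeness of $B_\grp$ over the DVR $A_\grp$ gives equality when $B$ is $A$-finite. The only difference is that you explicitly invoke Krull--Akizuki to justify that $B$ is a Dedekind domain (so that $\grp B$ factors into prime powers), a point the paper's proof takes for granted.
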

\begin{proof}
  This is well-known (cf.~\cite{serre:lf}); we include a proof for the
  reader's convenience. 
We localize the Dedekind domains $A$ and $B$ at $\grp$ and get a
  discrete valuation ring  $A_\grp$ and a semi-local Dedekind domain
  $B_\grp$ with same number of maximal ideals $\grP_i B_\grp$. Then
  $B_\grp$ is the integral closure of $A_\grp$ in $L$ and the
  numerical invariants $r, e_i, f_i$ remain the same.  Therefore,
  after replacing $A$ by $A_\grp$,  we can assume that $A$ is a
  discrete valuation ring with uniformizer $\pi$. 
  By the Chinese Remainder Theorem, $B/\grp B\simeq \prod_{i=1}^r
  B/\grP_i^{e_i}$. 
  We filter each $k(\grp)$-vector space $B/\grP_i^{e_i}$ by the
  decreasing subspaces $\grP_i^j/\grP_i^{e_i}$ and obtain 
  \begin{equation}
    \label{eq:4.1}
    \dim_{k(\grp)} B/\grp B = \sum_{i=1}^r \sum_{j=0}^{e_i-1}
    \dim_{k(\grp)} \grP_i^{j}/\grP_i^{j+1}.     
  \end{equation}
  The ideal $\grP_i^j/\grP_i^{j+1}$ generated by one element $a_j$ as
  the quotient ring $B/J$ for any nonzero ideal $J$ is a principal
  ideal ring. 
  The map
  $1 \mapsto a_j$ induces an isomorphism $A/\grP_i\simeq
  \grP_i^j/\grP_i^{j+1}$ and hence $\dim_{k(\grp)}
  \grP_i^j/\grP_i^{j+1}=f_i$. It follows from \eqref{eq:4.1} that
  $\dim_{k(\grp)} B/\grp B = \sum_{i=1}^r e_i f_i$. 

 We prove the inequality in \eqref{eq:4.1} by showing that if $\bar
 x_1, \dots \bar x_s$ are $k(\grp)$-linearly independent, then their
 liftings $x_1,\dots x_s$ are $K$-linearly independent. Suppose not,
 then 
  \[ a_1 x_1 + a_2 x_2 +\dots +a_s x_s =0 \]
  for some nonzero element $a_i$ in $K$. Multiplying a suitable power
  of $\pi$, we can assume that $a_i\in A$ for all $i$ but $a_i \not
  \in \grp$ for some $i$. Modulo $\grp$ 
we get a non-trivial linear relation $\bar a_1 \bar x_1+\bar a_2 \bar x_2+\dots +\bar a_s
  \bar x_s=0$, a contradiction. 

Suppose $B$ is a finite $A$-module. Since $B$ is torsion free and $A$
is a principal ideal domain, 
$B$ is a free $A$-module of rank $s$. Then one has $\dim_K L= \rank_A
B=\dim_{k(\grp)} B/\grp B$. This proves the desired equality. \qed     
\end{proof}   

\begin{prop}\label{prop:fe2}
  Let the notation be as in Lemma~\ref{lm:fe}. Then
  \begin{equation}
    \label{eq:fe3}
    \dim_{k(\grp)} B/\grp B =\sum_{i=1}^r \rank_{A_\grp^*}
    B_{\grP_i}^* =  \sum_{i=1}^r \dim_{K_\grp^*} L_{\grP_i}^* =
    [(L\otimes_K K_\grp^*)_{\rm ss}: K_\grp^*],   
  \end{equation}
  where $K^*_\grp$ and $A_\grp^*$ (resp.~ $L_{\grP_i}^*$ and
  $B_{\grP_i}^*$) denote the completion of $K$ and $A$ (resp.~$L$ and
  $B$) at $\grp$ (resp. $\grP_i$), respectively.  
\end{prop}
\begin{proof}
  The ramification index and residue class degree remain the same after the
  completion, that is, $e(\grP_i/\grp)=e(\grP_i^*/\grp^*)$ and
  $f(\grP_i/\grp)=f(\grP_i^*/\grp^*)$, where $\grp^*=\grp A_\grp^*$
  and $\grP_i^*=\grP_i B_{\grP_i}^*$. Since $K_{\grp}^*$ is complete
  and $L_{\grP_i}^*$ is a finite field extension of $K_\grp^*$, we
  have $e_if_i=e(\grP_i^*/\grp^*)f(\grP_i^*/\grp^*)=[
  L_{\grP_i}^*:K_\grp^*]$ \cite[Chap.~II, \S 2, Corollary
  1]{serre:lf}. Note that $B_{\grP_i}^*$ is the integral closure of
  $A_\grp^*$ in $L^*_{\grP_i}$. Since $B^*_{\grP_i}$ is a finite free
  $A_\grp^*$-module say of rank $m$, we have
  $L_{\grP_i}^*=B_{\grP_i}^* \otimes_{A_\grp^*} K_\grp^*\simeq
  (K_\grp^*)^m$ 
  and $\rank_{A_\grp^*} B^*_{\grP_i}=[L_{\grP_i}^*:K_\grp^*]=e_i
  f_i$. This and Lemma~\ref{lm:fe} prove the first two equalities.
   The last equality follows from the isomorphism
  $\prod_{i=1}^r L_{\grP_i}^* \simeq (L\otimes_K K_\grp^*)_{\rm ss}$; 
  see 
  Proposition~\ref{prop:E.2}.  \qed
\end{proof}

\subsection{Proof of Theorem~\ref{thm:2}}
\label{sec:42}

Theorem~\ref{thm:2} will follow immediately from Theorem~\ref{thm:JE}.


\begin{prop}\label{prop:fe_g}
  Let $A$ be a Dedekind domain with quotient field $K$. Then the
  following statements are equivalent: 
  \begin{enumerate}
  \item For any finite field extension $L/K$ and any nonzero prime
    ideal $\grp$ of $A$, one has $\sum_{\grP|\grp} e_\grP
    f_\grP=[L:K]$. 
  \item For any finite field extension $L/K$ and any nonzero prime
    ideal $\grp$ of $A$, the tensor product $L\otimes_K K_\grp^*$ is a
    semi-simple $K_\grp^*$-algebra. 
  \item $A$ is a $G$-ring.
  \item For every nonzero prime ideal $\grp$ of $A$, 
  the localization $A_\grp$ is Japanese.
  \end{enumerate}
\end{prop}

\begin{proof}
The equivalence of (1) and (2) follows from Theorem~\ref{thm:1}.

We prove the equivalence of (2) and (3). 
To show $A$ is a $G$-ring, one needs to show that every formal fiber
of $\phi_\grp: A_\grp \to A_\grp^*$ is geometrically regular. If
$\grp=0$, the formal fiber $K\to K^*=K$ is clearly geometrically
regular. Suppose $\grp$ is a nonzero prime ideal. The special
formal fiber   
$k(\grp) \to k(\grp^*)=k(\grp)$ is clearly geometrically regular and
the generic formal fiber is given by $K\to K_\grp^*$. By
Lemma~\ref{lm:geom-reg}, $K_\grp^*/K$ is geometrically regular if and
only if it is separable. 
Thus, $A$ is a $G$-ring if and only if for any nonzero prime ideal
$\grp$ of $A$, the field extension $K_\grp^*/K$ is separable. This is
equivalent to that for any finite extension $L/K$ and for any nonzero
prime ideal $\grp$ of $A$, the tensor product $L\otimes_K K^*_\grp$ is
semi-simple. 

We prove the equivalence of (1) and (4). The direction
(4)$\implies$(1) follows from Lemma~\ref{lm:fe} and we show the other
direction. For each nonzero prime ideal $\grp$, since $\sum_{i=1}^r 
e_i f_i=[L:K]$, by~Theorem~\ref{thm:E.5}
the integral closure $B_\grp$ of $A_\grp$ in $L$ is
a finite $A_\grp$-module.  
Thus, for any finite field extension
$L/K$ the integral closure of $A_\grp$ in $L$ is a finite $A$-module,
and hence $A_\grp$ is Japanese. This completes the proof of the
proposition. \qed



\end{proof}



\begin{thm}\label{thm:JE}
  Any Japanese Dedekind domain $A$ is excellent.
\end{thm}
\begin{proof}
  It follows immediately from Proposition~\ref{prop:fe_g} and
   Lemma~\ref{lm:japan-j2} that $A$ is quasi-excellent. It is
  well-known that any Dedekind domain is universally
  catenary (cf.~Remark~\ref{rem:catenary}). 
  Therefore, $A$ is excellent.  \qed
\end{proof}

\section*{Acknowledgements}
The author thanks Raymond Heitmann who kindly informed him an example of 
a locally Nagata principal ideal domain which is not
Nagata~\cite{heitmann:loc_nagata_not_nagata}. He also thanks the
referee for a careful reading and the reference~\cite{morel:adic}.
The author is partially support by the MoST grant 109-2115-M-001-002-MY3.


\begin{thebibliography}{10}

\def\jams{{\it J. Amer. Math. Soc.}} 
\def\invent{{\it Invent. Math.}} 
\def\ann{{\it Ann. Math.}} 
\def\ihes{{\it Inst. Hautes \'Etudes Sci. Publ. Math.}} 

\def\ecole{{\it Ann. Sci. \'Ecole Norm. Sup.}}
\def\ecole4{{\it Ann. Sci. \'Ecole Norm. Sup. (4)}} 
\def\mathann{{\it Math. Ann.}} 
\def\duke{{\it Duke Math. J.}} 
\def\jag{{\it J. Algebraic Geom.}} 
\def\advmath{{\it Adv. Math.}}
\def\compos{{\it Compositio Math.}} 
\def\ajm{{\it Amer. J. Math.}} 
\def\crelle{{\it J. Reine Angew. Math.}}
\def\plms{{\it Proc. London Math. Soc.}}
\def\jussieu{{\it J. Inst. Math. Jussieu}} 
\def\grenoble{{\it Ann. Lemma~\ref{lm:japan-j2}Inst. Fourier (Grenoble)}}
\def\imrn{{\it Int. Math. Res. Not.}}
\def\tams{{\it Trans. Amer. Math. Sci.}}
\def\mrl{{\it Math. Res. Lett.}}
\def\cras{{\it C. R. Acad. Sci. Paris S\'er. I Math.}} 
\def\mathz{{\it Math. Z.}} 
\def\cmh{{\it Comment. Math. Helv.}}
\def\docmath{{\it Doc. Math. }}
\def\asian{{\it Asian J. Math.}}
\def\acta{{\it Acta Math.}}
\def\indiana{{\it Indiana Univ. Math. J.}}

\def\acad{{\it Proc. Nat. Acad. Sci. USA}}

\def\jlms{{\it J. London Math. Soc.}}
\def\blms{{\it Bull. London Math. Soc.}}
\def\manmath{{\it Manuscripta Math.}} 
\def\jnt{{\it J. Number Theory}} 
\def\ijm{{\it Israel J. Math.}}
\def\ja{{\it J. Algebra}} 
\def\pams{{\it Proc. Amer. Math. Sci.}}
\def\smfmemoir{{\it Bull. Soc. Math. France, Memoire}}
\def\bsmf{{\it Bull. Soc. Math. France}}
\def\sb{{\it S\'em. Bourbaki Exp.}}
\def\jpaa{{\it J. Pure Appl. Algebra}}
\def\jems{{\it J. Eur. Math. Soc. (JEMS)}}
\def\jtokyo{{\it J. Fac. Sci. Univ. Tokyo}}
\def\cjm{{\it Canad. J. Math.}}
\def\jaums{{\it J. Australian Math. Soc.}}
\def\pspm{{\it Proc. Symp. Pure. Math.}}
\def\ast{{\it Ast\'eriques}}
\def\pamq{{\it Pure Appl. Math. Q.}}
\def\nagoya{{\it Nagoya Math. J.}}
\def\forum{{\it Forum Math. }}
\def\tjm{{\it Taiwanese J. Math.}}
\def\rt{{\it Represent. Theory}}
\def\bordeaux{{\it J. Th\'eor. Nombres Bordeaux}}
\def\ijnt{{\it Int. J. Number Theory}}
\def\jmsj{{\it J. Math. Soc. Japan}}
\def\rims{{\it Publ. Res. Inst. Math. Sci.}}
\def\ca{{\it Comm. Algebra}}
\def\osaka{{\it Osaka J. Math.}}
\def\bams{{\it Bull. Amer. Math. Soc.}}
\def\aa{{\it Acta Arith.}}
\def\bimas{{\it Bull. Inst. Math. Acad. Sin. (N.S.)}}
\def\jrms{{\it J. Ramanujan Math. Soc.}}

\def\tp{{to appear}}

\newcommand{\princeton}[1]{Ann. Math. Studies #1, Princeton
  Univ. Press}

\newcommand{\LNM}[1]{Lecture Notes in Math., vol. #1, Springer-Verlag}



\bibitem{akizuki} Y. Akizuki, Einige Bemerkungen  \"uber prim\"are
  Integrit \"atsbereiche mit Teilerkettensatz. {\it
    Proc. Phys.-Math. Soc. Japan}~{\bf 17} (1935), 327--336. 

\bibitem{artin:approx1} M. Artin, Algebraic approximation of
  structures over complete local rings. {\it Inst. Hautes \'Etudes
  Sci. Publ. Math.} No.~{\bf 36} (1969), 23--58. 


\bibitem{BLR:neron} Siegfried~Bosch, Werner L\"utkebohmert and
  Raynaud, Michel N\'eron models. Ergebnisse der Mathematik und ihrer
  Grenzgebiete (3), 21. Springer-Verlag, Berlin, 1990, 325 pp. 

\bibitem{bourbaki:top} Nicolas Bourbaki, {\it Elements of
   mathematics. General topology. Part 1.} Hermann, Paris;
   Addison-Wesley Publishing Co., Reading, Mass.-London-Don Mills,
   Ont. 1966, 437 pp. 

\bibitem{bourbaki:ca} Nicolas Bourbaki,  {\it Elements of
    mathematics. Commutative algebra.} Translated from the
  French. Hermann, Paris; Addison-Wesley Publishing Co., Reading,
  Mass., 1972. 625 pp. 

\bibitem{cohen-zariski} I. S. Cohen and Oscar Zariski, 
A fundamental inequality in the theory of extensions of valuations.
{\it Illinois J. Math.} {\bf 1} (1957), 1--8.

\bibitem{eisenbud} David Eisenbud, 
{\it Commutative algebra.
With a view toward algebraic geometry}. 
Graduate Texts in Mathematics~{\bf 150}. 
Springer-Verlag, New York, 1995. 785 pp.

\bibitem{ega4:20} A.~Grothendieck and J.~Dieudonn\'e: {\it El\'ements de
    G\'eom\'etrie Alg\'ebrique}, Publ. I.E.H.S. {\bf 20} (1961-1967).

\bibitem{ega4:24} A.~Grothendieck and J.~Dieudonn\'e: {\it El\'ements de
    G\'eom\'etrie Alg\'ebrique}, Publ. I.E.H.S. {\bf 24} (1961-1967).

\bibitem{heitmann:loc_nagata_not_nagata} Raymond Heitmann, A locally
  Nagata PID is not Nagata, To appear in {\it J. Pure. Appl. Algebra}. 
\begin{verbatim}
https://www.sciencedirect.com/science/article/abs/pii/S0022404921003261
\end{verbatim}

\bibitem{li:alg} Wen-Wei Li, {\it Methods of algebra: Volume 1,} (in
  Chinese), Higher Education Press (Beijing), 2019, 443 pp. 
  
\bibitem{matsumura:ca80}
 H.~Matsumura, {\it Commutative algebra}. Second edition. Mathematics
 Lecture Note Series, 56. Benjamin/Cummings Publishing, 1980, 313 pp.


\bibitem{morel:adic} Sophie Morel, Adic Spaces.~\begin{verbatim}https://web.math.princeton.edu/~smorel/adic_notes.pdf \end{verbatim}  
\bibitem{nagata:local} M. Nagata, {\it Local rings}. John Wiley
  Interscience, New York, London, 1962, 234 pp.
  
\bibitem{neukirch:ant} J.~Neukirch, {\it Algebraic Number
  Theory.} Grundlehren der Mathematischen Wissenschaften {\bf 322}. 
Springer-Verlag, Berlin, 1999, 571 pp.


\bibitem{reid:uca}  Miles Reid, {\it Undergraduate commutative
    algebra.} London Mathematical Society Student Texts~{\bf
    29}. Cambridge University Press, Cambridge, 1995. 153 pp. 

\bibitem{rotthaus} Christel  Rotthaus,  Nicht ausgezeichnete,
  universell japanische Ringe. {\it Math. Z.}~{\bf 152} (1977), no. 2,
  107--125.  

\bibitem{schmidt:1936} F.~K.~Schmidt, \"Uber die Erhaltung der 
Kettens\"atze der Idealtheorie bei beliebigen endlichen
K\"orpererweiterungen, {\it Math. Zeit.}~{\bf 41} (1936), 443--450. 

\bibitem{serre:lf} J.-P. Serre, {\it Local fields}. Graduate Texts in
Mathematics~{\bf 67}, Springer-Verlag, 1979, 241 pp.

\bibitem{yu:mo} C.-F. Yu, On the existence of maximal orders.
\ijnt~{\bf 7} (2011), no. 8, 2091--2114.
  

\end{thebibliography}
\end{document}